\newcommand{\ddbar}{\sqrt{-1} \partial \overline{\partial}}
\newcommand{\ti}[1]{\widetilde{#1}}
\newcommand{\abs}[1]{\lvert#1\rvert}
\newcommand{\vp}{\varphi}
\newcommand{\vol}{\mathrm{Vol}}
\newcommand{\ve}{\varepsilon}
\renewcommand{\leq}{\leqslant}
\renewcommand{\geq}{\geqslant}
\newcommand{\be}{\begin{equation}}
\newcommand{\ee}{\end{equation}}
\newcommand{\PSH}{\mathrm{PSH}}
\newcommand{\CP}{\mathbb{CP}}
\newcommand{\D}{\mathbb D}
\newcommand{\C}{\mathbb C}
\newcommand{\e}{\varepsilon}
\newcommand{\tr}{\mathrm{tr}}
\begin{document}
\newtheorem{claim}{Claim}
\newtheorem{theorem}{Theorem}[section]
\newtheorem{lemma}[theorem]{Lemma}
\newtheorem{corollary}[theorem]{Corollary}
\newtheorem{proposition}[theorem]{Proposition}
\newtheorem{question}{question}[section]
\newtheorem{defn}{Definition}[theorem]
\theoremstyle{definition}
\newtheorem{remark}[theorem]{Remark}

\numberwithin{equation}{section}

\title[Envelopes with prescribed singularities]{Envelopes with prescribed singularities}
\begin{abstract}We prove that quasi-plurisubharmonic envelopes with prescribed analytic singularities in suitable big cohomology classes on compact K\"ahler manifolds have the optimal $C^{1,1}$ regularity on a Zariski open set. This also proves regularity of certain pluricomplex Green's functions on K\"ahler manifolds. We then go on to prove the same regularity for envelopes when the manifold is assumed to have boundary. As an application, we answer affirmatively a question of Ross--Witt-Nystr\"om concerning the Hele-Shaw flow on an arbitrary Reimann surface.
\end{abstract}
\author[N. McCleerey]{Nicholas McCleerey}
\address{Department of Mathematics, Northwestern University, 2033 Sheridan Road, Evanston, IL 60208}
\email{njm2@math.northwestern.edu}

\thanks{Partially supported by NSF RTG grant DMS-1502632.}

\maketitle

\section{Introduction}

Let $X^n$ be a compact K\"ahler manifold without boundary and $M^n$ a compact K\"ahler manifold with boundary. In this note, we will be concerned with proving regularity of envelopes of the following two types:
\begin{equation}\label{env}
P_{[\psi]}(u)(x)=\sup\{v(x)\ |\ v\in \textrm{PSH}(X,\theta), v\leq u, v\leq \psi+C, \textrm{ for some }C\}^*
\end{equation}
and
\begin{align}\label{bddenv}
V_{[\psi]}(u)(x)=\sup\{v(x)\ |\ v\in \textrm{PSH}(M,\omega), &\limsup_{z\rightarrow z_0} v(z) \leq u(z_0)\forall z_0\in\partial M,\\ & v\leq \psi+C, \textrm{ for some }C\}^*.\nonumber
\end{align}

Here $\theta$ is a closed, smooth, real $(1,1)$-form on $X$ such that the cohomology class $[\theta]$ is big, $\omega$ is a K\"ahler form on $M$, $u$ is a smooth function on $X$/$M$ (the ``obstacle'' of the envelope), and $\psi$ is a quasi-plurisubharmonic function with analytic singularities (the ``singularity type'' of the envelope) whose singularities do not intersect the boundary. Here and in the following $f^*$ means the upper semi-continuous regularization of $f$. We will also fix another closed, real $(1,1$)-form $\gamma$ such that $\psi$ is $\gamma$-psh.

Envelopes of the form \eqref{env} first appeared in the work of Ross-Witt Nystr\"om \cite{RWN,RWN2} in relation with geodesic rays in the space of K\"ahler metrics (see also the survey \cite{RWN3}), and they have played a central role in recent developments, see e.g. \cite{Da,DDL,DDL2} and references therein. Since $\psi$ has analytic singularities, if $\theta=\gamma$ then it is easy to see (cf. \cite{RS}, \cite[Remark 3.9]{RWN2}) that the envelope $P_{[\psi]}(u)$ differs from $\psi$ by a bounded amount, i.e. it has the same singularity type as $\psi$.

If we choose $\psi$ a bounded function, then \eqref{env} reduces to the envelope
\begin{equation}\label{iguess}
P(u)(x)=\sup\{\vp(x)\ |\ \vp\in \textrm{PSH}(X,\theta), \vp\leq u\},
\end{equation}
which was originally used as an example of a $\theta$-psh function with ``minimal singularities'' (\cite{DPS}), and \eqref{bddenv} reduces to the classical Perron-Bremermann envelopes \cite{Br}. The regularity of both has been studied extensively -- in particular, Berman \cite{Be3,Be,Be2} and Berman-Demailly \cite{BD} showed that $P(u)$ has bounded Laplacian, and hence is in $C^{1,\alpha}$ for all $\alpha < 1$ on compact subsets of the ample locus of the class $[\theta]$ (a Zariski open subset of $X$). Examples in \cite[Example 5.2]{Be3}, \cite[Example 7.2]{CT}, and \cite[Corollary 4.7]{SZ} show that one cannot in general expect $P(u)\in C^2$, and so the best possible regularity is $C^{1,1}$, which was indeed shown by Tosatti \cite{To} and Chu-Zhou \cite{CZ} when $[\theta]$ is K\"ahler. More recently, $C^{1,1}$ regularity of $P(u)$ on compact subsets of the ample locus of $[\alpha]$ has been proved by Chu-Tosatti-Weinkove \cite{CTW1} when $[\alpha]$ is nef and big. These results are obtained by combining the ``zero temperature limit'' deformation of Berman \cite{Be} with the $C^{1,1}$ estimates for degenerate complex Monge-Amp\`ere equations developed by Chu-Tosatti-Weinkove in \cite{CTW,CTW2}.

The envelope \eqref{bddenv} with $u=0$ is also known as the ``pluricomplex Green's function,'' as introduced in \cite{Kl} (see also \cite{Le}) with $\psi(z)=\log|z|^2$ and in \cite{RS} for general $\psi$ with analytic singularities, and whose regularity has been studied extensively by many authors \cite{BedDem,Bl,Gu,PS1}, primarily in the case when $\psi$ has poles at a discreet set of points. For \eqref{env}, the paper of Ross-Witt Nystr\"om \cite{RWN2} proved that if $[\theta]=c_1(L)$ and $[\gamma]=c_1(F)$ are the first Chern classes of holomorphic line bundles $L$ and $F$ over $X$ with $L-F$ big, and if $e^\psi$ is H\"older continuous (which is more general than $\psi$ having analytic singularities) then \eqref{env} has the optimal $C^{1,1}$ regularity on compact subsets of the ample locus $L-F$ away from the singularities of $\psi$. For this, they adapted the technique of Berman \cite{Be3}, which is in turn inspired by a classical $C^{1,1}$ estimate of Bedford-Taylor \cite{BT}.

We now state our results. The first is concerning regularity of the envelope \eqref{env}:
\begin{theorem}\label{thrm}
Let $X$ be a compact K\"ahler manifold and $\theta, \gamma$ two closed real $(1,1)$-forms on $X$ such that $[\theta]$ is big, $[\gamma]$ is pseudoeffective, and $[\theta-\gamma]$ is nef. Suppose also that $u\in C^\infty(X)$ and $\psi\in PSH(X,\gamma)$ with analytic singularities. Then if either:
\begin{enumerate}
\item $[\theta - \gamma]$ is big, or
\item $\int_X \langle (\gamma +  \ddbar \psi)^n\rangle > 0,$
\end{enumerate}
we have that $P_{[\psi]}(u)\in C^{1,1}_{loc}(X\setminus Y)$, where $Y\subset X$ is a proper analytic subset given either by $Y=E_{nK}(\theta-\gamma)\cup\mathrm{Sing}(\psi)$ in case $(1)$, or by $Y=\mu(E_{nK}(\ti{\gamma}))\cup\mathrm{Sing}(\psi)$ in case $(2)$ (where $\ti{\gamma}$ and $\mu$ are defined in \eqref{dec2} below).
\end{theorem}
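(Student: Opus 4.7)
The plan is to adapt the ``zero-temperature limit'' of Berman \cite{Be,Be3} combined with the degenerate $C^{1,1}$ estimates of Chu--Tosatti--Weinkove \cite{CTW,CTW1,CTW2} to the singular setting, essentially following \cite{RWN2} but upgrading their Bedford--Taylor input to CTW's stronger machinery. First, since $\psi$ has analytic singularities, I would pass to a log resolution $\mu : \tilde X \to X$ of the ideal cutting out $\mathrm{Sing}(\psi)$, producing a decomposition $\mu^*(\gamma + \ddbar\psi) = \tilde\gamma + [D]$ with $\tilde\gamma$ smooth and semipositive and $D$ an effective $\mathbb R$-divisor with simple normal crossings support. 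Setting $\tilde\theta := \mu^*(\theta-\gamma) + \tilde\gamma$, the class $[\tilde\theta]$ is nef on $\tilde X$; it is big in case (1) since $\mu^*(\theta-\gamma)$ already is, and in case (2) because the hypothesis $\int_X \langle (\gamma + \ddbar\psi)^n\rangle > 0$ passes through the non-pluripolar product to give $\int_{\tilde X}\tilde\theta^n > 0$, hence bigness.

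Next, on $\tilde X$ I would run Berman's approximation scheme: after smoothing $\psi$ in the usual way, solve a family of complex Monge--Amp\`ere equations
\be
(\tilde\theta + \e\omega + \ddbar \tilde\vp_{\e,\beta})^n = e^{\beta(\tilde\vp_{\e,\beta} - \mu^*u)}\omega^n
\ee
on $\tilde X$, with $\omega$ a fixed K\"ahler form. Standard arguments (cf. \cite{Be,BD}) identify $\lim_{\beta\to\infty}\lim_{\e\to 0}\tilde\vp_{\e,\beta}$ with the lift $P_{[\psi]}(u)\circ\mu$ modulo the divisorial singularity coming from $D$. The CTW machinery should then provide uniform-in-$(\e,\beta)$ $C^{1,1}$ estimates for $\tilde\vp_{\e,\beta}$ on compact subsets of $\tilde X \setminus (E_{nK}(\tilde\theta)\cup\mathrm{Supp}(D))$. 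Pushing down via $\mu$ yields $C^{1,1}_{loc}$ regularity of $P_{[\psi]}(u)$ on $X\setminus Y$ with the correct $Y$ in each case, after observing that $\mu(E_{nK}(\tilde\theta))$ identifies with $E_{nK}(\theta-\gamma)$ in case (1) and with $\mu(E_{nK}(\tilde\gamma))$ in case (2).

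The main obstacle I anticipate is making the $C^{1,1}$ bound genuinely uniform in both parameters while controlling $\tilde\vp_{\e,\beta}$ near $\mathrm{Supp}(D)$, since the CTW estimates were originally formulated for a single degenerate equation rather than a temperature-parameterized family whose right-hand side degenerates as $\beta \to \infty$. Case (2) is the more delicate one: bigness is inferred only from the mass assumption, so any loss of non-pluripolar mass during regularization could destroy the argument, and the regularization of $\psi$ must therefore be chosen (for instance \`a la Demailly) to preserve mass in the limit. A secondary point is verifying that the upper envelope of the approximating solutions on $\tilde X$ really represents $P_{[\psi]}(u)\circ\mu$ rather than a proper sub- or super-envelope; this requires carefully matching the prescribed singularity $\psi$ with the divisorial behavior that $\tilde\vp_{\e,\beta}$ inherits from $[D]$, and in particular comparing the outcome to the fact, mentioned in the introduction, that $P_{[\psi]}(u)$ and $\psi$ differ by a bounded quantity when $\theta=\gamma$.
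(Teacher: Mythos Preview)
Your setup through the log resolution is correct and matches the paper: you form $\mu:\tilde X\to X$, decompose $\mu^*(\gamma+\ddbar\psi)=\tilde\gamma+[D]$, and observe that the class $[\tilde\theta]:=\mu^*[\theta-\gamma]+[\tilde\gamma]$ (equivalently $[\mu^*\theta-R_h]$ in the paper's notation) is nef and big in both cases. So far so good.

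The gap is in the next step. Your Monge--Amp\`ere equation has right-hand side $e^{\beta(\tilde\vp_{\e,\beta}-\mu^*u)}\omega^n$, i.e.\ obstacle $\mu^*u$. But the Berman trick (the paper's equation \eqref{key}) shows that $\mu^*P_{[\psi]}(u)=\log|s|_h+V$, where $V$ is the envelope in $\PSH(\tilde X,\mu^*\theta-R_h)$ with obstacle $\mu^*u-\log|s|_h$. This obstacle is \emph{unbounded}, blowing up to $+\infty$ along $\mathrm{Supp}(D)$. Your equation with the smooth bounded obstacle $\mu^*u$ would instead converge to $\sup\{v\in\PSH(\tilde X,\tilde\theta):v\le\mu^*u\}$, which is a strictly smaller envelope; it does not recover $P_{[\psi]}(u)\circ\mu$ ``modulo the divisorial singularity'' as you claim. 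The singularity constraint has been absorbed into the class via the $-R_h$ shift, but in exchange the obstacle has acquired a $-\log|s|_h$ term, and you cannot simply drop it.

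The paper resolves this not by re-running Berman's scheme with new uniform estimates, but by a direct reduction: it shows that the envelope $V$ with the unbounded obstacle $-\log|s|_h$ actually equals the envelope with the smooth bounded obstacle $\widetilde{\min}\{-\log|s|_h,\tilde M+1\}$, where $\tilde M$ is a finite upper bound for $V$ obtained from the global extremal function of a compact set missing $\mathrm{Supp}(D)$. Once the obstacle is smooth and bounded, \cite[Theorem 1.3]{CTW1} applies as a black box and the proof is finished---no smoothing of $\psi$, no mass-preservation issues, no two-parameter uniform estimates to establish. Your anticipated obstacles in case (2) are therefore red herrings: after resolution $\mu^*\psi-\log|s|_h$ is already smooth, and bigness of $[\tilde\gamma]$ follows immediately from $\int_{\tilde X}\tilde\gamma^n=\int_X\langle(\gamma+\ddbar\psi)^n\rangle>0$ without any regularization.
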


Here $\langle\cdot\rangle$ is the non-pluripolar product of \cite{BEGZ}. Case $(1)$ of this theorem is an extension of \cite[Theorem 1.1]{RWN2} to transcendental cohomology classes, in the special case when $\psi$ has analytic singularities and the class $[\theta - \gamma]$ is nef. As a special example of case $(2)$ one can for example take $\theta=\gamma$.

We remark that the $C^{1,1}$ regularity of the envelope \eqref{env} in general was explicitly raised as an open problem in \cite[p.2]{CZ}, which our result solves in this setting. As mentioned above, this also gives the optimal regularity of pluricomplex Green's functions of the form \eqref{env} with $u=0$, with ``pole type'' given by $\psi$.

Also, as in \cite[Section 4.5]{RWN2} and \cite{Be3,BD,CTW1,To}, we obtain as a direct corollary that:
\begin{equation}\label{snoop}
\chi_{X\backslash Y}(\theta+\ddbar P_{[\psi]}(u))^n=\chi_{\{P_{[\psi]}(u)=u\}}(\theta+\ddbar u)^n.
\end{equation}
We also see that, in the case that $X$ is of complex dimension one, the boundary of the ``contact set" $\{P_{[\psi]}(u) = u\}$ has measure zero -- see \cite[Cor. 5.13]{RWN3}. Interestingly, the proof does not generalize to higher dimensions, as $C^{1,1}$ regularity is not enough to guarantee the existance of a foliation of holomorphic disks, even locally \cite{DG}.

Theorem \ref{thrm} will be deduced from \cite[Theorem 1.3]{CTW1} after some reductions that will allow us to transform the envelope \eqref{env} into one of the form \eqref{iguess} on a bimeromorphic model of $X$. In particular, our method of proof is quite different from the one in \cite{RWN2} in the case of line bundles. The assumption that $[\theta - \gamma]$ be nef is only needed so that we may apply \cite[Theorem 1.3]{CTW1} -- if one can remove this assumption from \cite[Theorem 1.3]{CTW1}, then it can also be dropped from Theorem \ref{thrm} without changing the proof.

Our second set of results concern the envelopes \eqref{bddenv}, and using them we are able to answer a question of Ross--Witt-Nystr\"om concerning regularity of an envelope appearing in their work on the Hele-Shaw flow on a Riemann surface \cite[Def. 6.1]{RWN}. Suppose that $X$ is now a closed Riemann surface, and let $\D\subset \C$ be the open unit disk and $\overline{\D}^* := \overline{\D}\setminus\{0\}$ the punctured unit disk. Let $\omega$ be a K\"ahler form on $X$ with $\int_X \omega = 1$ and $\pi: X\times\D\rightarrow X$ be the projection map. Then we have:

\begin{theorem}\label{bad}
Let $ 0 \leq t\leq 1$ and $z_0\in X$. Define the following envelopes:
\begin{equation}\label{HS}
\vp_t := \sup\left\{ v\in\PSH(X\times\overline{\D}, \pi^*\omega)\ \middle|\ \limsup_{|\tau|\rightarrow 1} v(z,\tau)\leq 0,\ \ \nu(v, (z_0, 0)) \geq t\right\}.
\end{equation}
Then if  $t < 1$, we have that $\vp_t\in C^{1,1}_\text{loc}((X\times\overline{\D})\setminus\{(z_0, 0)\})$. If $t =1$, we have that $\vp_1\in C^{1,1}_\text{loc}(X\times\overline{\D}^*)$.
\end{theorem}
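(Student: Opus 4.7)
I would reformulate $\vp_t$ as an envelope of type \eqref{bddenv} and then apply the boundary-envelope regularity theorem established earlier in the paper. The principal conceptual obstacle is that $\nu(v,(z_0,0))\ge t$ is a pointwise Lelong-number condition at a codimension-two point, while \eqref{bddenv} encodes singularities through a potential $\psi$ with analytic singularities along an analytic subvariety; I would reconcile this via Skoda's lemma.

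Concretely, choose local holomorphic coordinates $z$ on $X$ near $z_0$ with $z(z_0)=0$, and set $\psi(z,\tau) := \tfrac{t}{2}\log(|z|^2+|\tau|^2)$ in a neighborhood of $(z_0,0)$, globalized via cutoff to a function on all of $X\times\overline{\D}$ that is $\gamma$-psh for some smooth $(1,1)$-form $\gamma$, has analytic singularities at the single point $(z_0,0)$, and is smooth elsewhere. By Skoda's lemma and the standard sup-characterization of Lelong numbers, $\nu(v,(z_0,0))\ge t$ holds iff $v\le \psi+C$ for some $C$. Thus $\vp_t = V_{[\psi]}(0)$, an envelope of the form \eqref{bddenv} with trivial obstacle $u\equiv 0$ on $X\times\partial\D$ and $\mathrm{Sing}(\psi)=\{(z_0,0)\}$ (which does not meet the boundary). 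The boundary-envelope regularity theorem then yields $\vp_t\in C^{1,1}_\text{loc}((X\times\overline{\D})\setminus (Y\cup\mathrm{Sing}(\psi)))$ for an appropriate exceptional locus $Y$.

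A technical point is that $\pi^*\omega$ is only semi-positive (degenerate along fibers of $\pi$), not strictly K\"ahler. I would accommodate this within the big-class framework of the paper, or by passing to the blow-up $\sigma\colon \hat M \to X\times\overline{\D}$ at $(z_0,0)$: on $\hat M$, the singularity of $\sigma^*\psi$ becomes divisorial along the exceptional $E\cong \CP^1$, and a K\"ahler perturbation $\hat\omega_\ve := \sigma^*\pi^*\omega+\ve\eta$ reduces to the standard setting; uniform estimates and the limit $\ve\to 0$, followed by pushdown through $\sigma$, give the desired regularity.

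For $t<1$, the relevant non-pluripolar Monge-Amp\`ere mass (computed on the blow-up) is strictly positive, so $Y\subset \mathrm{Sing}(\psi)$ and we obtain $C^{1,1}_\text{loc}$ regularity precisely on $(X\times\overline{\D})\setminus\{(z_0,0)\}$. The case $t=1$ is critical: since $\int_X\omega=1$, this mass vanishes, and an analysis analogous to case (2) of Theorem~\ref{thrm} shows that the exceptional locus $Y$ must include the entire fiber $X\times\{0\}$ (reflecting the saturation of the Hele-Shaw ``wet region'' at time zero -- concretely, $\vp_1(\,\cdot\,,0)$ is forced to coincide with the $\omega$-Green's function on $X$ with pole at $z_0$, using that $\omega+dd^c \vp_1(\cdot,0)$ is a positive measure of total mass $1$ carrying a Dirac of mass $\ge 1$ at $z_0$). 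Pushing down yields the sharp regularity $\vp_1\in C^{1,1}_\text{loc}(X\times\overline{\D}^*)$. The hardest step will be the $t=1$ analysis: identifying the exceptional locus precisely via non-pluripolar mass and the Hele-Shaw geometry. The degeneracy of $\pi^*\omega$ is a secondary issue that the bimeromorphic/perturbation technique handles.
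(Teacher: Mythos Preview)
Your plan has the right general shape (blow up at $(z_0,0)$, convert the Lelong-number constraint into a divisorial one, then invoke a $C^{1,1}$ envelope theorem), but there are two genuine gaps.

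\textbf{The degeneracy of $\pi^*\omega$ is not a secondary issue.} On $X\times\overline{\D}$ (or its compactification $X\times\CP^1$) the class $[\pi^*\omega]$ is \emph{not} big: since $\dim X=1$ we have $(\pi^*\omega)^2=0$. Hence neither hypothesis of Theorem~\ref{thrm} can hold, and the ``non-pluripolar mass'' you compute is zero for every $t$, not just $t=1$. Your proposed fix, perturbing to $\sigma^*\pi^*\omega+\ve\eta$ and letting $\ve\to 0$, cannot give uniform estimates: the limiting class has empty ample locus, so the $C^{1,1}$ bounds from \cite{CTW1,CTW2} blow up everywhere as $\ve\to 0$. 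The paper's actual device is different and essential: one writes $\vp_t=2p^*u+\vp_t'$, where $u=(1/2\pi)\log(1+|\tau|^2)-(1/2\pi)\log 2$ is the Fubini--Study potential on $\overline{\D}$ normalized to vanish on $\partial\D$. This is free (it does not change the boundary obstacle) and replaces $\pi^*\omega$ by the genuine K\"ahler form $\pi^*\omega+2p^*\omega_{FS}$. Proposition~\ref{dumppy} then computes the Seshadri constant at $(z_0,0)$ to be exactly $\ve_{z_0}(\omega)=1$, so for $t<1$ the class $[\mu^*(\pi^*\omega+2p^*\omega_{FS})-tR_h]$ is K\"ahler and one can apply \cite{CTW2} directly (after Boucksom's extension trick to make the boundary obstacle globally psh).

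\textbf{The $t=1$ case cannot be handled by ``an analysis analogous to case (2) of Theorem~\ref{thrm}''.} You yourself note that the relevant mass vanishes, which is precisely the hypothesis that makes case~(2) inapplicable; there is no version of Theorem~\ref{thrm} that covers this. The paper instead invokes Theorem~\ref{weak}, whose hypotheses are of a different nature: one needs the class $[\mu^*(\pi^*\omega+2p^*\omega_{FS})-R_h]$ to be big and nef with $E_{nK}$ contained in $\mu^{-1}(p^{-1}(0))$ (this is Corollary~\ref{the frog}, again relying on the product structure via Proposition~\ref{dumppy}), a subsolution $v_0$ that is strictly psh near $\partial M$, and weak pseudoconcavity of $\partial(X\times\overline{\D})$ (it is Levi-flat). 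None of these ingredients appear in your outline. Your heuristic about $\vp_1(\cdot,0)$ being the Green's function is correct intuition for why regularity should fail across $X\times\{0\}$, but it does not by itself produce $C^{1,1}$ estimates on $X\times\overline{\D}^*$.
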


Indeed, the envelope $\vp_1$ is precisely the envelope \cite[Def. 6.1]{RWN}. In \cite[Rmk. 6.11]{RWN}, the authors speculate that $\vp_1$ should be in $C^{1,\alpha}_\text{loc}$ for all $\alpha < 1$, and actually prove that it is in $C^{1,1}_\text{loc}$ when $X = \CP^1$, using either the results of \cite{CTW2} or \cite{Bl2}. We thus show that this stronger result indeed always holds. As a corollary, one can now apply \cite[Cor 6.7]{RWN} to the Hele-Shaw flow on a general Riemann surface $X$ to see that it is always strictly increasing at at least some fixed positive rate.

Proving Theorem \ref{bad} breaks into two cases: the case $t < 1$ follows a similar argument to Theorem \ref{thrm} and ends up reducing to the regularity result in \cite{CTW2}. Similar reductions in the $t=1$ case do not however yield the situation in \cite{CTW2}, and so we use the following theorem, which is a varient of the estimates in \cite{CTW1} and \cite{Bou}:

\begin{theorem}\label{weak}Suppose that $(M,\omega)$ is a compact K\"ahler manifold with boundary. Suppose moreover that $\alpha$ is a closed, real $(1,1)$-form on $M$ such that the following holds:
\begin{itemize}
\item There exists a function $\psi\in\PSH(M,\alpha)$,  $\psi\leq 0$, with analytic singularities, smooth near and up to the boundary of $M$, that also satisfies $\alpha + \ddbar\psi\geq \delta\omega$ for some small $\delta> 0$.
\item For every $\ve > 0$, there exists a smooth function $u_\ve\leq -1$ such that $\alpha_\ve := \alpha + \e\omega + \ddbar u_\ve > 0$ is a K\"ahler form.
\item There exists a function $v_0\in\PSH(M,\alpha)$ such that $v_0\leq 0$, $v_0|_{\partial M}\equiv 0$, and $v_\e := \ti{\max}\{v_0,u_\e\}$ is a smooth $(\alpha+\ve\omega)$-psh function up to the boundary for all $\ve > 0$.
\end{itemize}
Then the envelope:
\begin{equation}\label{thethingprime}
V := \sup\left\{ v\in\PSH(M, \alpha)\ \middle|\ \limsup_{|z|\rightarrow \partial M} v(z)\leq 0\right\}
\end{equation}
is uniformly Lipschitz on compact subsets of $M\setminus\mathrm{Sing}(\psi)$.

If we further have that $\alpha + \ddbar v_0\geq \delta\omega$ in a neighborhood of $\partial M$ and that the boundary of $M$ is pseudoconcave, then actually $V\in C^{1,1}_\text{loc}(M\setminus\mathrm{Sing}(\psi))$.
\end{theorem}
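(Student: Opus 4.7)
The plan is to reduce to a family of Perron--Bremermann envelopes in Kähler classes and then apply (and adapt) the $C^{0,1}$ and $C^{1,1}$ estimates of \cite{CTW1,Bou}, using $\psi$ to handle the interior degeneracy and $v_0,u_\ve,v_\ve$ to handle the boundary. For each $\ve>0$, set
\begin{equation*}
V_\ve := \sup\bigl\{\, v\in\PSH(M,\alpha+\ve\omega)\ :\ \limsup_{z\to\partial M} v(z) \le 0 \,\bigr\}^*.
\end{equation*}
Since $\PSH(M,\alpha+\ve\omega)$ grows with $\ve$, $V_\ve$ decreases as $\ve\to 0^+$, and the standard candidate argument gives $V = \lim_{\ve \to 0^+}V_\ve$. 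Setting $\ti V_\ve := V_\ve - u_\ve$, the translation $w = v - u_\ve$ identifies $\ti V_\ve\in\PSH(M,\alpha_\ve)$ with the Perron--Bremermann envelope of $\alpha_\ve$-psh functions bounded above by $-u_\ve$ on $\partial M$. The smooth function $v_\ve - u_\ve$ is $\alpha_\ve$-psh, non-negative in the interior, and equal to $-u_\ve$ on $\partial M$, so it is an admissible subsolution; by Bremermann--Walsh, $\ti V_\ve$ is continuous and solves $(\alpha_\ve + \ddbar\ti V_\ve)^n = 0$ in $M^\circ$ with $\ti V_\ve|_{\partial M} = -u_\ve|_{\partial M}$. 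To get quantitative estimates I further approximate by smooth solutions $\vp_{\ve,\eta}$ of $(\alpha_\ve + \ddbar\vp)^n = \eta\,\omega^n$ with the same boundary data, which exist by the standard continuity method and increase to $\ti V_\ve$ as $\eta\to 0^+$.

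For the first (uniformly Lipschitz) conclusion I apply the maximum principle to the CTW-type quantity
\begin{equation*}
Q = \log|\nabla\vp_{\ve,\eta}|^2_{g_\ve} - F(\psi) + A(\vp_{\ve,\eta} + u_\ve)
\end{equation*}
on compact subsets of $M\setminus\mathrm{Sing}(\psi)$, where $g_\ve$ is the metric of $\alpha_\ve$ and $F$ is a suitable concave function blowing up on $\mathrm{Sing}(\psi)$. The hypothesis $\alpha + \ddbar\psi \ge \delta\omega$ produces the decisive good term as in \cite{CTW1}; since $\psi$ is smooth up to $\partial M$ the weight $F(\psi)$ is harmless there, while the smooth barrier $v_\ve - u_\ve$, which agrees with $\vp_{\ve,\eta}$ on $\partial M$, pins down the boundary gradient. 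Letting $\eta\to 0^+$ then $\ve\to 0^+$ and invoking Arzelà--Ascoli yields a uniform Lipschitz bound for $V$ on compacts of $M\setminus\mathrm{Sing}(\psi)$.

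Under the additional hypotheses (pseudoconcave $\partial M$ and $\alpha+\ddbar v_0\ge\delta\omega$ near $\partial M$) I upgrade to the Laplacian-type quantity
\begin{equation*}
\log(n + \Delta_\omega\vp_{\ve,\eta}) - F(\psi) + A(\vp_{\ve,\eta} + u_\ve).
\end{equation*}
The interior estimate is again handled by the CTW technique via $\psi$. The boundary $C^{1,1}$ estimate is the delicate step: one must control the tangent-tangent, tangent-normal, and normal-normal second derivatives of $\vp_{\ve,\eta}$ on $\partial M$. Pseudoconcavity of $\partial M$ gives the correct sign for the Levi form in the tangent-tangent computation, while $\alpha+\ddbar v_0\ge\delta\omega$ near $\partial M$ makes $v_\ve - u_\ve$ an effective barrier controlling the mixed and normal parts, in the spirit of \cite{Bou}. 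The main obstacle is reconciling these two ingredients into a single auxiliary quantity producing good terms both in the interior (via $\psi$) and on $\partial M$ (via $v_0$), with constants uniform in $(\ve,\eta)$; once achieved, passing to the limit gives the claimed $C^{1,1}_{\text{loc}}$ regularity of $V$ on $M\setminus\mathrm{Sing}(\psi)$.
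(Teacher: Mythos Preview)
Your overall architecture---double approximation, CTW-type interior estimates weighted by $\psi$, Boucksom-style boundary barriers---is the same as the paper's, but your approximation scheme differs and there are two genuine gaps.

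The paper does not regularize via $(\alpha_\ve + \ddbar\vp)^n = \eta\,\omega^n$ as $\eta\to 0$. Instead it uses Berman's zero-temperature limit: solve $(\alpha+\ve\omega+\ddbar V_{\ve,\beta})^n = e^{\beta V_{\ve,\beta}+f(\ve)}\omega^n$ with $V_{\ve,\beta}|_{\partial M}=0$ and let $\beta\to\infty$. The key point is that this is done directly with reference form $\alpha+\ve\omega$ and the \emph{fixed} background metric $\omega$; the paper never shifts by $u_\ve$ except momentarily to invoke the existence theorem. Your decision to measure gradients in $g_\ve$, the metric of $\alpha_\ve = \alpha+\ve\omega+\ddbar u_\ve$, is problematic because $u_\ve$ (hence $\alpha_\ve$) is uncontrolled as $\ve\to 0$, so a bound on $|\nabla\vp_{\ve,\eta}|_{g_\ve}$ does not obviously translate into the needed bound on $|\nabla V_\ve|_\omega$. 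In Berman's scheme one also gets, by a trivial maximum principle, $\beta V_{\ve,\beta}\le C$ uniformly, so the right-hand side $e^{\beta V_{\ve,\beta}+f}$ stays bounded above independently of both $\ve$ and $\beta$; this is exactly what makes the CTW-type estimates uniform in the parameters, whereas with right-hand side $\eta\,\omega^n$ you must separately argue uniformity in $\eta$, which you do not address. The paper additionally introduces an explicit upper barrier (the function $h$ with $\Delta_\omega h=-n$, $h|_{\partial M}=0$) to control the boundary gradient from above; your sketch has only the lower barrier $v_\ve$.

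The more serious gap is in the $C^{1,1}$ step: a bound on $\log(n+\Delta_\omega\vp_{\ve,\eta})$ controls only the complex Hessian $\ddbar\vp$, which yields $C^{1,\alpha}$ for every $\alpha<1$ but \emph{not} $C^{1,1}$. The paper (following \cite{CTW1}) first bounds the Laplacian by a Yau-type argument and then runs a separate maximum principle for the largest eigenvalue of the \emph{full real Hessian} $\nabla\nabla V_{\ve,\beta}$; it is this second step that delivers $C^{1,1}$, and your quantity must be replaced accordingly. Finally, you have misplaced the role of pseudoconcavity: the tangent--tangent second derivatives are trivially bounded because the boundary data is constant. Pseudoconcavity enters only in the normal--normal estimate, where one reduces via the Monge--Amp\`ere equation and the already-established tangent--normal bounds to a lower bound on $(\alpha_\ve+\ddbar V)|_{T^h_{\partial M}} = (\alpha+\ddbar v_0)|_{T^h_{\partial M}} + (\nu\cdot(V-v_0))\,L_\nu$, and the last term is $\ge 0$ precisely because both factors are $\le 0$.
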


One could replace the boundary obstacle 0 in the above theorem with any smooth function on $M$ in the obvious manner.

A few words are now in order about the function $v_0$. It is well-known that on a K\"ahler manifold with boundary, it may not always be possible to solve the Dirichlet problem for the complex Monge-Ampere equation. It is always possible to solve if one however assumes the existance of a smooth (strong) subsolution \cite{Bou}. Our $v_0$ then plays the role of this subsolution. However, since we are not assuming any sort of semi-positivity on $\alpha$, one cannot expect there to exist any smooth $\alpha$-psh functions -- in particular, $v_0$ will be unbounded in general. We then require smooth-ness of $v_0$ away from its unbounded locus, which is satisfied if, for example, $v_0$ has analytic singularities. Finally, it is unclear to us if the assumption that $v_0$ be strictly $\alpha$-psh near $\partial M$ is necessary for the Hessian estimates or not -- at any rate, it is enough for our application, and so we do not address this here.

As a concluding remark, we would like to point out that many envelopes of the form \eqref{bddenv} can be dealt with using the techniques in this paper, if one assumes additional information relating $\omega$ and $\psi$ or additional information on $M$; indeed, Theorem \ref{bad} is an example of such a result. There does not appear to be a precise framework one can invoke to cover all such cases, but the general theme is that $\psi$ have relatively ``mild'' analytic singularities, which can usually be checked cohomologically/geometrically.\\

This paper is organized as follows. In section two, we give the proof of Theorem \ref{thrm}. In section three, we prove Theorem \ref{bad} by reducing the envelopes \eqref{HS} to either the situation in \cite{CTW2} (when $t < 1$) or that of Theorem \ref{weak}, which is then proved in section four. We then give an application to global extremal functions, which may be well-known to experts (Remark \ref{extremal}). \\

\noindent
{\bf Acknowledgments. } We would like to thank Julius Ross for useful comments about the Hele-Shaw flow and for encouraging us to prove Theorem \ref{bad}. I would also like to thank my advisor Valentino Tosatti for introducing me to this circle of questions, for greatly improving the format of this paper, and for his continued patience and guidance.

\section{Proof of Theorem \ref{thrm}}
\begin{proof}[Proof of Theorem \ref{thrm}]
Let us note right away that it is enough to prove Theorem \ref{thrm} with $u=0$, since we can write
\[\begin{split}
&P_{[\psi]}(u)(x)-u(x)=\\
&\sup\{v(x)\ |\ v\in \textrm{PSH}(X,\theta+\ddbar u), v\leq 0, v\leq \psi+C, \textrm{ for some }C\}^*.
\end{split}
\]
In the following we thus let $u=0$, and denote the corresponding envelope in \eqref{env} simply by $w$.

Recall now that we say that $\psi$ has analytic singularities if there exists a real number $c > 0$ and a coherent ideal sheaf $\mathcal J$ such that for every $x\in X$, there is a small neighborhood $U\subseteq X$ of $x$ so that $\mathcal J$ is generated on $U$ by holomorphic functions $(f_1,\ldots, f_N)$ and:
\[
\psi|_U = c\log\left(\sum_{i=1}^N|f_i|^2\right) + h,
\]
for some smooth, real valued function $h$, defined locally on $U$. We can then apply Hironaka's theorem \cite{Hir} to resolve $\mathcal J$ (thus resolving the singularities of $\psi$ also), giving us $\mu:\ti{X}\to X$, with $\mu$ a composition of blowups with smooth centers, $\ti{X}$ a compact K\"ahler manifold, and
\begin{equation}\label{dec2}
\mu^*(\gamma + \ddbar\psi) = \ti{\gamma} + \llbracket E\rrbracket,
\end{equation}
where $E=\sum_i\lambda_i E_i$ is an effective $\mathbb{R}$-divisor ($\lambda_i\in\mathbb{R}_{>0}$) with $\mathrm{Supp}(E)=\mathrm{Exc}(\mu)$, $\llbracket E\rrbracket$ is its current of integration, and $\ti{\gamma}$ is a closed, semipositive $(1,1)$-form, which, by definition of the non-pluripolar product, satisfies
\[
\int_{\ti{X}}\ti{\gamma}^n=\int_X \langle(\gamma+\ddbar \psi)^n\rangle.
\]
If (2) is satisfied, then this number is positive, so that the class $[\ti\gamma]$ is both nef (being semipositive) and big.

Fix now defining sections $s_i$ of $\mathcal{O}(E_i)$ and smooth metrics $h_i$ on $\mathcal{O}(E_i)$ with curvature form $R_{h_i}$. For brevity, denote
$$|s|^2_h=\prod_i |s_i|^{2\lambda_i}_{h_i}, \quad R_h=\sum_i\lambda_i R_{h_i}.$$
Then by the Poincar\'e-Lelong formula
\[
\llbracket E\rrbracket=\ddbar\log|s|_h+R_h,
\]
so that:
\begin{equation}\label{dec}
[\mu^*\theta - R_h] = \mu^*[\theta - \gamma] + [\ti\gamma].
\end{equation}
Nefness is preserved under pull-back by $\mu$, and since the nef cone is convex, we see that $[\mu^*\theta - R_h]$ is nef. Now if (1) is satisfied, then $\mu^*[\theta-\gamma]$ is big, as volume is preserved under $\mu^*$ as well. If (2) is satisfied, then as we have already remarked, $[\ti\gamma]$ is big. Thus, in either case, at least one of the two classes is big, and so their sum $[\mu^*\theta - R_h]$ must also be big (and nef).

Note also that by definition of analytic singularities, the function $\mu^*\psi - \log|s|_h$ is in fact smooth on all of $\ti{X}$.

Next, note that in case $(1)$ we can use \eqref{dec} to see that
$$E_{nK}(\mu^*\theta-R_h)\subseteq E_{nK}(\mu^*(\theta-\gamma)),$$
while
$$E_{nK}(\mu^*(\theta-\gamma))=\mu^{-1}(E_{nK}(\theta-\gamma))\cup\mathrm{Exc}(\mu),$$
by \cite[Proposition 2.5]{To2}, and since $\mu(\mathrm{Exc}(\mu))\subseteq\mathrm{Sing}(\psi)$, we conclude that
$$\mu(E_{nK}(\mu^*\theta-R_h))\subseteq E_{nK}(\theta-\gamma)\cup \mathrm{Sing}(\psi).$$
In case $(2)$ we again use \eqref{dec} to get
$$E_{nK}(\mu^*\theta-R_h)\subseteq E_{nK}(\ti{\gamma}),$$
so that in either case we have $\mu(E_{nK}(\mu^*\theta-R_h)\cup \mathrm{Supp}(E))\subseteq Y$, where $Y$ is as in the statement of Theorem \ref{thrm}.

We shall now use an idea of Berman \cite[Section 4]{Be}. Note that
\[
\mu^*w(x)=\sup\{v(x)\ |\ v\in \textrm{PSH}(\ti{X},\mu^*\theta), v\leq 0, v\leq \mu^*\psi+C, \textrm{ for some }C\}^*,
\]
because every $\mu^*\theta$-psh function on $\ti{X}$ is in fact the pullback of a $\theta$-psh function on $X$ (since $\mu$ has connected fibers). Since $\mu^*\psi - \log|s|_h$ is smooth, we also have
\[
\mu^*w(x)=\sup\{v(x)\ |\ v\in \textrm{PSH}(\ti{X},\mu^*\theta), v\leq 0, v\leq \log|s|_h+C, \textrm{ for some }C\}^*,
\]
We claim that we actually have
\begin{equation}\label{key}
\mu^*w(x)=\log|s|_h(x)+\sup\{v(x)\ |\ v\in PSH(\ti{X},\mu^*\theta-R_h), v\leq -\log|s|_h\}.
\end{equation}
Note that here we can drop the upper-semi-continuous regularization from the envelope, since its upper-semi-continuous regularization is a candidate for the supremum.
To see our claim, denote the right hand side by $U$. For one direction, if $v$ is $(\mu^*\theta-R_h)$-psh and satisfies $v\leq -\log|s|_h$, then
$\ti{v}:=v+\log|s|_h$ satisfies $\ti{v}\leq 0$. But since we also have $v\leq C$ on $\ti{X}$, it follows that $\ti{v}\leq \log|s|_h+C$, and
\[
\begin{split}
\mu^*\theta+\ddbar \ti{v}&=\mu^*\theta+\ddbar\log|s|_h+\ddbar v\\
&=\mu^*\theta-R_h+\llbracket E\rrbracket+\ddbar v\\
&\geq \mu^*\theta-R_h+\ddbar v\geq 0,
\end{split}
\]
so that $U\leq\mu^*w$. Conversely, if $\ti{v}$ is $\mu^*\theta$-psh and satisfies $\ti{v}\leq 0$ and $\ti{v}\leq \log|s|_h+C$ for some $C$, then the Siu decomposition of $\mu^*\theta+\ddbar \ti{v}$ contains $\llbracket E\rrbracket$ and so
$$0\leq \mu^*\theta+\ddbar \ti{v}-\llbracket E\rrbracket=\mu^*\theta-R_h+\ddbar(\ti{v}-\log|s|_h).$$
Thus $v:=\ti{v}-\log|s|_h$ is $(\mu^*\theta-R_h)$-psh and satisfies $v\leq -\log|s|_h,$ and it follows that $\mu^*w\leq U$, which proves \eqref{key}.

Using \eqref{key}, since the term $\log|s|_h$ is smooth on $\ti{X}\backslash \mathrm{Supp}(E)$, it is enough to prove that the envelope
$$V(x)=\sup\{v(x)\ |\ v\in PSH(\ti{X},\mu^*\theta-R_h), v\leq -\log|s|_h\},$$
satisfies $V\in C^{1,1}_{\rm loc}(\ti{X}\backslash E_{nK}(\mu^*\theta-R_h))$. To do this, we show that we can actually replace the unbounded obstacle $-\log|s|_h$ with a (smooth)  bounded obstacle. Having done this, we can appeal directly to \cite[Theorem 1.3]{CTW1} to conclude the proof, after again changing our reference form to reduce to the case when the obstacle is uniformly 0.

Start by fixing a small neighborhood $U$ of $\mathrm{Supp}(E)$, and let $$M:=\sup_{\ti{X}\setminus U} (-\log|s|_h) <\infty.$$ Recall then that the (global) extremal function of $\ti{X}\setminus U$ is defined by
\[
V_{\ti{X}\setminus U}(x) := \sup\{v(x)\ |\ v\in PSH(\ti{X},\mu^*\theta-R_h), v\leq 0 \text{ on } \ti{X}\setminus U\}.
\]
By the remarks in \cite[Section 4]{BEGZ}, $\sup_{\ti{X}} V_{\ti{X}\setminus U} < \infty$ if and only if $X\setminus U$ is not pluripolar. We can easily arrainge this however, e.g. by picking $U$ such that $\ti{X}\setminus U$ has positive (Lebesgue) measure. By our choice of $U$ then,
$$
(V - M)|_{\ti{X}\setminus U}\leq (-\log|s|_h - M)|_{\ti{X}\setminus U} \leq 0
$$
and so
\[
V \leq M + V_{\ti{X}\setminus U}\leq M + \sup_{\ti{X}} V_{\ti{X}\setminus U} =: \tilde{M} < \infty,
\]
holds on $\ti{X}$.
Consequently,
\begin{equation}\label{pp}
V\leq \min\{-\log|s|_h, \ti M\}.
\end{equation}
Denote by $\widetilde{\min}$ a regularized minimum function with error $1/2$; i.e. on the set $\{|u - v| > 1/2\}$, we have $\widetilde{\min}\{u,v\} = \min\{u,v\}$, and when $|u - v| \leq 1/2$, $\widetilde{\min}\{u,v\}$ smoothly interpolates between $u$ and $v$. Thus, we always have the two-sided bound:
\begin{equation}\label{min}
\min\{ u -1/2, v - 1/2\}\leq \widetilde{\min}\{u,v\}\leq\min\{u,v\}.
\end{equation}
Explicitly, we can take $\widetilde{\min}(u,v)=-\widetilde{\max}(-u,-v)$, where $\widetilde{\max}$ is the regularized maximum constructed in \cite[Lemma 5.18]{De}.

Then we claim that we actually have
\begin{equation}\label{ahh}
V(x) = \sup\left\{v(x)\ \middle|\ v\in PSH(\ti{X},\mu^*\theta-R_h), v\leq \widetilde{\min}\{-\log|s|_h, \ti M + 1\}\right\},
\end{equation}
which will finish the proof (by using \cite[Theorem 1.3]{CTW1}), since now the obstacle $$\widetilde{\min}\{-\log|s|_h, \ti M + 1\}$$ is smooth on $\ti{X}$.

To prove \eqref{ahh}, note that if $v\in PSH(\ti{X},\mu^*\theta-R_h)$ satisfies
$$v\leq \widetilde{\min}\{-\log|s|_h, \ti M + 1\},$$
then in particular, by \eqref{min}, $v\leq -\log|s|_h$. If on the other hand $v$ satisfies $v\leq -\log|s|_h$,
then by definition $v\leq V$ and using \eqref{pp} we have
$$v\leq V\leq \min\{-\log|s|_h, \ti M\}.$$
To conclude, we observe that
\[
\min\{-\log|s|_h, \ti M\}\leq  \widetilde{\min}\{-\log|s|_h, \ti M+1\}.
\]
This inequality is obvious if $|\ti M + 1 + \log|s|_h| > 1/2$, while if on the other hand $|\ti M + 1 + \log|s|_h| \leq 1/2$, then
\[
\ti M + 1 + \log|s|_h \leq 1/2 \implies \ti M \leq -\log|s|_h - 1/2,
\]
and so, using \eqref{min} again,
\[\begin{split}
\min\{-\log|s|_h, \ti M\}&=\ti{M}=\min\{-\log|s|_h-1/2,\ti{M}\}\\
&\leq \widetilde{\min}\{-\log|s|_h,\ti{M}+1/2\}\leq \widetilde{\min}\{-\log|s|_h,\ti{M}+1\},
\end{split}\]
which concludes the proof.
\end{proof}

\section{An Application to Hele-Shaw Flow}

We will now show regularity of the envelopes $\vp_t$. We will actually prove a slightly stronger theorem, in that we do not need $X$ to be a Reimann surface:
\begin{theorem}\label{badprime}
Suppose that $(X^n,\omega)$ is a compact K\"ahler manifold without boundary and $\D\subset\C$ is the unit disk. Let $z_0\in X$ and $\pi:X\times\overline{\D}\rightarrow X$ be the projection map. Define for all $0\leq t\leq \e_{z_0}(\omega)$ the envelope:
\[
\vp_t := \sup\left\{ \psi\in\PSH(X\times\overline{\D}, \pi^*\omega)\ \middle|\ \limsup_{|\tau|\rightarrow 1} \psi(z,\tau)\leq 0, \nu(\psi, (z_0, 0)) \geq t\right\}.
\]
Then for $t <  \e_{z_0}(\omega)$, we have that $\vp_t\in C^{1,1}_\text{loc}((X\times\overline{\D})\setminus\{(z_0, 0)\})$. If $t = \e_{z_0}(\omega)$, we have that $\vp_1\in C^{1,1}_\text{loc}(X\times\overline{\D}^*)$. Here $\e_{z_0}(\omega)$ is the Seshadri constant of $[\omega]$ at $z_0$.
\end{theorem}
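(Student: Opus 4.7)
The plan is to mirror the reduction used in the proof of Theorem \ref{thrm}, now working on the manifold-with-boundary $M := X \times \overline{\D}$. I would blow up the point $(z_0,0) \in M$ to obtain $\mu\colon \widetilde M \to M$ with exceptional divisor $E_0$, and fix a smooth Hermitian metric $h$ on $\mathcal{O}(E_0)$ with curvature $R_h$ and defining section $s_{E_0}$. The Lelong-number condition $\nu(\psi,(z_0,0)) \geq t$ translates, via the Siu decomposition, into $\mu^*\psi \leq t\log|s_{E_0}|_h + C$, so the same subtraction identity used to derive \eqref{key} yields
\[
\mu^*\vp_t = t\log|s_{E_0}|_h + W,
\]
where $W$ is the envelope of $v \in \PSH(\widetilde M, \alpha)$ with $\alpha := \mu^*\pi^*\omega - tR_h$, subject to $\limsup_{|\tau|\to 1} v \leq -t\log|s_{E_0}|_h$ and $v \leq -t\log|s_{E_0}|_h$ globally. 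Since $E_0$ does not meet $\partial\widetilde M$, the induced boundary obstacle is smooth and bounded, and $[\alpha] = \mu^*[\pi^*\omega] - t[E_0]$.

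A maximum-principle argument bounds $W$ above by a constant $\widetilde N$, so as in the proof of Theorem \ref{thrm} I can replace the unbounded global obstacle $-t\log|s_{E_0}|_h$ by the smooth bounded function $\widetilde{\min}\{-t\log|s_{E_0}|_h, \widetilde N + 1\}$ without changing $W$. After this truncation, $W$ is an envelope with smooth bounded obstacles both globally and on the boundary, with reference form $\alpha$.

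For $t < \e_{z_0}(\omega)$, the defining property of the Seshadri constant says $[\alpha]$ is a Kähler class on $\widetilde M$, so after replacing $\alpha$ by a cohomologous Kähler form (absorbing the $\ddbar$-exact difference into the obstacles), $W$ becomes a Perron-Bremermann-type envelope on a Kähler manifold with pseudoconcave boundary, to which the Hessian estimate of \cite{CTW2} applies, giving $W \in C^{1,1}_{loc}(\widetilde M)$. Pulling back through $\mu$, which is a biholomorphism off $E_0 = \mu^{-1}(\{(z_0,0)\})$, yields $\vp_t \in C^{1,1}_{loc}(M \setminus \{(z_0,0)\})$.

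For the critical case $t = \e_{z_0}(\omega)$, the class $[\alpha]$ sits on the boundary of the Kähler cone and \cite{CTW2} no longer applies; instead I invoke Theorem \ref{weak}. The bulk of the work is to produce a subsolution package $(\psi_0, u_\e, v_0)$ verifying its three hypotheses with respect to a fixed Kähler form $\omega'$ on $\widetilde M$. For $\psi_0$, I would take $\mu^*\chi - t\log|s_{E_0}|_h$, where $\chi$ is a $\pi^*\omega$-psh function on $M$ with a log-pole of the form $t\log(|z-z_0|^2 + |\tau|^2)$ at $(z_0,0)$ and smooth elsewhere; this yields $\alpha + \ddbar\psi_0 \geq \delta\omega'$, with $\mathrm{Sing}(\psi_0) = E_0 \cup$ (strict transform of $X \times \{0\}$). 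The smooth $u_\e$ come from the $\ddb$-lemma applied to the Kähler class $[\alpha] + \e[\omega']$, and a suitable $v_0$ can be built from a convex function of $|\tau|^2$ vanishing at $|\tau|=1$, with a $\log|s_{E_0}|_h$-correction as needed to ensure $v_0 \in \PSH(\widetilde M, \alpha)$ globally. Pseudoconcavity of $\partial\widetilde M = X \times \partial\D$ is immediate from the defining function $|\tau|^2 - 1$. Theorem \ref{weak} then gives $W \in C^{1,1}_{loc}(\widetilde M \setminus \mathrm{Sing}(\psi_0))$, and since $\mu(\mathrm{Sing}(\psi_0)) = X \times \{0\}$, pushing down yields $\vp_{\e_{z_0}(\omega)} \in C^{1,1}_{loc}(X \times \overline{\D}^*)$. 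The main technical obstacle is the construction of this subsolution package in the borderline nef case, and in particular arranging that $\mathrm{Sing}(\psi_0)$ pushes forward to exactly the slice $X \times \{0\}$ that appears in the statement.
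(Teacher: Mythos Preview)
Your overall architecture --- blow up at $(z_0,0)$, use the Berman subtraction to absorb the Lelong-number constraint into the reference form, then invoke \cite{CTW2} when $t<\ve_{z_0}(\omega)$ and Theorem~\ref{weak} when $t=\ve_{z_0}(\omega)$ --- matches the paper's. The ingredient you are missing is the paper's first move: before blowing up, one embeds $X\times\overline{\D}$ into $X\times\CP^1$ and replaces $\pi^*\omega$ by the genuine K\"ahler form $\pi^*\omega+2p^*\omega_{FS}$ (on $X\times\overline{\D}$ the two differ by $\ddbar$ of a smooth function, so the envelope changes only by an additive smooth term). This compactification is what makes Seshadri-constant reasoning available on a \emph{compact} K\"ahler manifold: Proposition~\ref{dumppy} gives $\ve_{(z_0,0)}(\pi^*\omega+2p^*\omega_{FS})=\ve_{z_0}(\omega)$, which is what actually justifies that the relevant class on the blow-up is K\"ahler for $t<\ve_{z_0}(\omega)$, and Corollary~\ref{the frog} then supplies both the K\"ahler current $\psi$ and the nef-ness (hence the $u_\ve$) needed for Theorem~\ref{weak} in the critical case.

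Without the Fubini-Study input your construction of $\psi_0$ at $t=\ve_{z_0}(\omega)$ fails outright. You ask for a $\pi^*\omega$-psh function $\chi$ on $X\times\overline{\D}$ with an isolated log-pole of weight $\ve_{z_0}(\omega)$ at $(z_0,0)$, smooth elsewhere, and with $\pi^*\omega+\ddbar\chi\geq\delta\omega'$. But restricting such a $\chi$ to the slice $X\times\{0\}$ would give an $\omega$-psh function on $X$, smooth outside $z_0$, with Lelong number at least $\ve_{z_0}(\omega)$ there and with $\omega+\ddbar(\chi|_{X\times\{0\}})$ a K\"ahler current; rescaling by $(1-\eta)^{-1}$ then produces a positive current in $[\omega]$ with an isolated singularity of Lelong number strictly greater than $\ve_{z_0}(\omega)$, contradicting Demailly's characterization of the Seshadri constant. (This is also why your sketch is internally inconsistent: you assert $\chi$ is smooth away from $(z_0,0)$ yet list the strict transform of $X\times\{0\}$ in $\mathrm{Sing}(\psi_0)$.) The correct $\psi$ necessarily carries singularities along all of $\mu^{-1}(X\times\{0\})$; the paper builds it from a K\"ahler current on the $\CP^1$ factor, where the Seshadri constant is $2>\ve_{z_0}(\omega)$ and so there is room. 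As a side remark, the global obstacle you impose on $W$ is redundant --- after the Berman subtraction the envelope carries only a boundary condition --- so the truncation step from Theorem~\ref{thrm} is unnecessary here; the paper instead uses a Boucksom-type cutoff to manufacture a smooth subsolution with the correct boundary values.
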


Theorem \ref{bad} follows from Theorem \ref{badprime} by using the easy fact that, if $X$ is a Riemann surface, then $\e_{z_0}(\omega) = \vol(\omega) = \int_X\omega$.

\begin{remark}
In Theorem \ref{badprime}, one can easily replace $\D\subset\C$ with the unit ball $B(1)\subset\C^n$ with only minimal changes to the proof. More generally, one can use any smooth, bounded, hyperconvex domain $D\subset\C^N$ which admits a smooth strictly plurisubharmonic exhaustion function $u$ such that $\ddbar u$  is actually the restricion of a global, real $(1,1)$-form $\beta$ on $\CP^N$ such that $[\beta]$ is big.
\end{remark}

In what follows, we will sometimes think of $X\times\overline{\mathbb{D}}$ as being a subset of $X\times\CP^1$, using the standard inclusion and stereographic projection, $\mathbb D\subset \mathbb C\subset\CP^1$. We write $p: X\times\CP^1\rightarrow\CP^1$ for the projection map onto $\CP^1$. We first recall an easy proposition on product manifolds:

\begin{proposition}\label{dumppy}
Suppose that $X$ and $Y$ are compact K\"ahler manifolds, with K\"ahler forms $\omega_X$ and $\omega_Y$, respectively. Let $\pi_X$ and $\pi_Y$ be the projections from $X\times Y$. Then for any $x\in X$ and $y\in Y$:
\[
\ve_{(x,y)}(\pi_X^*\omega_X + \pi_Y^*\omega_Y) = \min\{\ve_x(\omega_X), \ve_y(\omega_Y)\}
\]
and
\[
\zeta_{(x,y)}(\pi_X^*\omega_X + \pi^*_Y\omega_Y) \geq \zeta_x(\omega_X) + \zeta_y(\omega_Y),
\]
where $\zeta_x(\omega_X)$ is the Nakayama constant of $[\omega_X]$ at $x\in X$.
\end{proposition}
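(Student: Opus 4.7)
The plan is to use the blowup characterizations
\[
\ve_x(\alpha) = \sup\{t\geq 0 : \mu^*\alpha - t[E]\ \text{nef}\}, \qquad \zeta_x(\alpha) = \sup\{t\geq 0 : \mu^*\alpha - t[E]\ \text{pseudoeffective}\},
\]
where $\mu$ denotes the blowup at the relevant point with exceptional divisor $E$; via the correspondence between closed positive currents and quasi-psh functions together with the Siu decomposition, the second quantity equals $\sup\{\nu(\psi,x) : \psi\in\PSH(X,\alpha)\}$. I treat the Nakayama and Seshadri statements separately.

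For the Nakayama inequality, fix $\ve>0$ and choose $\psi_X\in\PSH(X,\omega_X)$ with $\nu(\psi_X,x)\geq\zeta_x(\omega_X)-\ve$, and similarly $\psi_Y$. Then $\psi := \pi_X^*\psi_X + \pi_Y^*\psi_Y$ lies in $\PSH(X\times Y, \pi_X^*\omega_X+\pi_Y^*\omega_Y)$ by summing the two $\ddbar$ inequalities. Since Lelong numbers are additive under sums of quasi-psh functions and are preserved under pullback along smooth submersions (as follows immediately from $\nu(\psi, p) = \liminf_{z\to p}\psi(z)/\log|z-p|$, using that $\log|z-x| \leq \log|(z,w)-(x,y)|$), one obtains $\nu(\psi,(x,y)) = \nu(\psi_X,x) + \nu(\psi_Y,y)$; letting $\ve\to 0$ gives the claim.

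For the Seshadri equality, let $\mu_0:W\to X\times Y$ denote the blowup at $(x,y)$ with exceptional $E$. The inequality $\leq$ is by restriction: the proper transform of $\{x\}\times Y$ in $W$ is biholomorphic to $\ti Y = \mathrm{Bl}_y Y$, and under this identification $\pi_X^*\omega_X$ restricts trivially (the first projection collapses), $\pi_Y^*\omega_Y$ restricts to $\mu_Y^*\omega_Y$, and $[E]$ restricts to $[E_Y]$. Since nefness is preserved under restriction to a smooth complex submanifold (smooth representatives $\theta_\delta \geq -\delta\omega$ pull back to $\theta_\delta|_{\ti Y} \geq -\delta C\omega_{\ti Y}$ on compact $\ti Y$), $\mu_0^*[\pi_X^*\omega_X+\pi_Y^*\omega_Y] - t[E]$ nef on $W$ forces $\mu_Y^*[\omega_Y] - t[E_Y]$ nef on $\ti Y$, hence $t\leq\ve_y(\omega_Y)$; the symmetric argument yields $t\leq\ve_x(\omega_X)$.

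The reverse direction $\ve_{(x,y)}\geq\min\{\ve_x,\ve_y\}$ is the main obstacle. My approach is to test nefness of $\mu_0^*[\pi_X^*\omega_X+\pi_Y^*\omega_Y] - t[E]$ against curves in $W$: curves $C\subset E\cong\CP^{n+m-1}$ give $t\deg_{\mathrm{proj}}(C)\geq 0$ using $\mathcal{O}_W(E)|_E = \mathcal{O}_E(-1)$, while curves $C$ arising as the proper transform of an irreducible $\ti C\subset X\times Y$ through $(x,y)$ with multiplicity $m$ contribute $\int_{\ti C}(\pi_X^*\omega_X+\pi_Y^*\omega_Y) - tm$, which I would bound case-by-case by projecting $\ti C$ onto $X$ and $Y$ and applying the Seshadri inequalities on the factors, combined with an intersection-theoretic multiplicity bound of the form $m \leq d_X\,\mathrm{mult}_x(\pi_X\ti C) + d_Y\,\mathrm{mult}_y(\pi_Y\ti C)$ relating $m$ to the projection data. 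The genuine subtlety is that in the K\"ahler (non-projective) setting Kleiman's criterion does not reduce nefness to curves alone, so this curve-based argument must be generalized to test against higher-dimensional irreducible analytic subvarieties as in the Demailly-Paun characterization; carrying this through carefully is the principal technical point.
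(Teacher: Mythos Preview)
Your Nakayama inequality argument is essentially the paper's, and your Seshadri upper bound via restriction to the proper transform of $\{x\}\times Y$ is a clean variant of the paper's restriction-of-currents argument; both are fine.

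The genuine gap is the Seshadri lower bound $\ve_{(x,y)}\geq\min\{\ve_x,\ve_y\}$. You correctly identify that curve-testing is insufficient in the K\"ahler category, but your proposed remedy---pushing through a Demailly--Paun--type argument against all irreducible analytic subvarieties, together with an unspecified multiplicity inequality for projections of curves---is neither carried out nor clearly feasible; the multiplicity bound you write down is not obviously true (and is ill-posed when a projection collapses the curve). The paper sidesteps all of this by using Demailly's alternative characterization
\[
\ve_z(\theta)=\sup\{\nu(T,z):T\geq 0,\ T\in[\theta],\ z\text{ an isolated singularity of }T\},
\]
which turns the lower bound into a direct construction: pick K\"ahler currents $T_X=\omega_X+\ddbar\psi_X$ and $T_Y=\omega_Y+\ddbar\psi_Y$ with isolated analytic singularities at $x,y$ and Lelong numbers close to $\ve_x(\omega_X),\ve_y(\omega_Y)$, and set
\[
\psi:=\max\{\pi_X^*\psi_X,\pi_Y^*\psi_Y\}\in\PSH(X\times Y,\omega).
\]
This $\psi$ is continuous off $(x,y)$ (since each factor is continuous off its pole set and the pole sets intersect only there), and from $\nu(\psi,p)=\liminf\psi/\log|\cdot-p|$ one reads off $\nu(\psi,(x,y))\geq\min\{\nu(\psi_X,x),\nu(\psi_Y,y)\}$. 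This gives the lower bound immediately, with no cone-theoretic analysis at all. I would replace your curve/Demailly--Paun outline with this max-construction.
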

\begin{proof}

Recall the definitions for the Seshadri and Nakayama constants: Suppose that $Z$ is a compact K\"ahler manifold with $\theta$ a K\"ahler form and $z\in Z$. Let $\mu: \ti{Z}\rightarrow Z$ be the blow-up of $Z$ at $z$, with exceptional divisor $E$. Then we define
\begin{align*}
&\ve_z(\theta) := \sup\{t\geq 0\ |\ [\mu^*\theta] - t[E]\text{ is a K\"ahler class}\}\text{ and }\\
&\zeta_z(\theta) := \sup\{t\geq 0\ |\ [\mu^*\theta] - t[E]\text{ is a big class}\}.
\end{align*}
It is shown in Demailly \cite{De0}, that we have an equivalent definition for the Seshadri constant in terms of the Lelong number:
\[
\ve_z(\theta) = \sup\{ \nu(T,z)\ |\ T\geq 0, T\in[\theta],\text{ and } z \text{ is an isolated singularity of }T\},
\]
where here we will say that $z$ is an isolated singularity of $T$ if we can write $T = \theta + \ddbar\psi$ with $\psi$ continuous on $U\setminus\{z\}$, for some open neighborhood $U$ of $z$. Note that, since $\theta$ is K\"ahler, in the above definition we can also request that $T$ be a K\"ahler current, that it have analytic singularities, and that it actually be smooth on all of $Z\setminus\{z\}$, using Demailly approximation.

For ease of notation, let $\omega := \pi_X^*\omega_X + \pi^*_Y\omega_Y$ on $X\times Y$. We can prove out first claim as follows: if $T\in[\omega]$ is as above, being singular only at $(x,y)$, the restriction $T|_{\pi^{-1}_Y(y)}$ makes sense, and gives a K\"ahler current in $[\pi^*_X\omega_X|_{\pi^{-1}_Y(y)}] = [\omega_X]$ with Lelong number greater than or equal to $\nu(T, (x,y))$, as the Lelong number can only increase under restriction; this is immediate from the characterization of the Lelong numbers of $T = \omega + \ddbar\psi$ as:
\begin{equation}\label{logdef}
\nu(T, z) = \sup\{\gamma\ |\ \psi(p)\leq \gamma\log |p - z| + C,\text{ for some constant } C\},
\end{equation}
where here $|p - z|$ is the Euclidean distance in any local coordinate chart about $z$. This shows that
\[
\ve_{(x,y)}(\omega)\leq\min\{\ve_x(\omega_X),\ve_y(\omega_Y)\}.
\]

For the reverse inequality, we fix $\delta > 0$, and find K\"ahler currents $T_X\in[\omega_X]$ and $T_Y\in[\omega_Y]$ with analytic singularities only at $x$ and $y$, respectively, and having $\nu(T_X,x) \geq \ve_x(\omega_X) - \delta$ and $\nu(T_Y,y) \geq \ve_y(\omega_Y) - \delta$. Writing $T_X = \omega_X + \ddbar\psi_X$ and $T_Y = \omega_Y + \ddbar\psi_Y$, we see that the pull-backs
\[
\pi_X^*\psi_X, \pi^*_Y\psi_Y\in\PSH(X\times Y, \omega),
\]
and so we also have
\[
\psi := \max\{\pi_X^*\psi_X, \pi^*_Y\psi_Y\}\in\PSH(X\times Y, \omega).
\]
This $\psi$ is then a continuous function on $(X\times Y)\backslash\{(x,y)\}$ with an isolated singularity at $(x,y)$ and having $\nu(\psi,(x,y))\geq \min\{\ve_x(\omega_X) - \delta, \ve_y(\omega_Y) - \delta\}$, which is again clear from \eqref{logdef}. Letting $\delta\rightarrow 0$ then shows the first claim.

For the second claim, we use the following characterization of the Nakayama constant in terms of Lelong numbers \cite[Prop 3.1]{MX}:
\[
\zeta_z(\theta) = \sup\{\nu(T,z)\ |\ T\geq 0, T\in[\theta]\}.
\]
Thus, if $T_X\in[\omega_X]$ realizes $\nu(T_X,x) = \zeta_x(\omega_X)$, we have that $\pi_X^*T_X$ is a positive current on $X\times Y$ with generic Lelong number
\[
\nu(\pi_X^*T_X, \pi_X^{-1}(x)) = \nu(T_X,x),
\]
and so $\nu(\pi_X^*T_X, (x, y')) \geq \nu(T_X,x)$ for all $y'\in Y$. Doing the same for $y$, we then get that the current $T := \pi_X^*T_X  + \pi_Y^*T_Y$ is a positive current cohomologous to $\omega$ such that $\nu(T, (x,y)) \geq \nu(T_X,x) + \nu(T_Y,y)$, which proves the claim.

\end{proof}

We then get the following corollary:
\begin{corollary}\label{the frog}
Let $\omega_X,\omega_Y,$ and $\omega$ be as above, and $\mu: \mathrm{Bl}_{(x,y)}(X\times Y)\rightarrow X\times Y$ the blow-up map of $X\times Y$ at $(x,y)$ with exceptional divisor $E$. If then $\ve_x(\omega_X) < \ve_y(\omega_Y)$, we have that $[\mu^*\omega - \ve_{(x,y)}(\omega)\llbracket E\rrbracket]$ is a big and nef class. Furthermore, $E_{nK}(\mu^*\omega - \ve_{(x,y)}(\omega)\llbracket E\rrbracket) \subseteq \mu^{-1}(\pi_Y^{-1}(y))$.
\end{corollary}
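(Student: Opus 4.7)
My plan breaks into two parts: the big-and-nef claim is a direct bookkeeping from Proposition \ref{dumppy}, while the $E_{nK}$-containment requires constructing a specific K\"ahler current in the class.

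Setting $t := \ve_{(x,y)}(\omega)$, I would first invoke Proposition \ref{dumppy} to write $t = \min\{\ve_x(\omega_X),\ve_y(\omega_Y)\} = \ve_x(\omega_X)$, using the hypothesis $\ve_x(\omega_X) < \ve_y(\omega_Y)$. Nefness then follows immediately: by definition of the Seshadri constant, $[\mu^*\omega - s\llbracket E\rrbracket]$ is K\"ahler for every $s<t$, and letting $s \nearrow t$ the class is nef since the nef cone is closed. For bigness, I would combine the Nakayama inequality of Proposition \ref{dumppy} with the trivial bound $\zeta \geq \ve$ to obtain
\[
\zeta_{(x,y)}(\omega) \geq \zeta_x(\omega_X) + \zeta_y(\omega_Y) \geq \ve_x(\omega_X) + \ve_y(\omega_Y) > t,
\]
using $\ve_y(\omega_Y) > 0$ (as $[\omega_Y]$ is K\"ahler). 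Hence $t < \zeta_{(x,y)}(\omega)$, so $[\mu^*\omega - t\llbracket E\rrbracket]$ is big.

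For the $E_{nK}$ containment, my plan is to exhibit a single K\"ahler current in $[\mu^*\omega - t\llbracket E\rrbracket]$ whose analytic singularities are contained in $\mu^{-1}(\pi_Y^{-1}(y))$. First, using the Demailly characterization of the Seshadri constant recalled in the proof of Proposition \ref{dumppy}, and since $t < \ve_y(\omega_Y)$, I would choose a K\"ahler current $T_Y \in [\omega_Y]$ on $Y$ which is smooth on $Y \setminus \{y\}$, has analytic singularities only at $y$, and satisfies $\nu(T_Y, y) > t$. Next, I would lift to the product by defining $T := \pi_X^*\omega_X + \pi_Y^* T_Y$; since $T_Y \geq c\omega_Y$ for some $c > 0$, this is a K\"ahler current on $X \times Y$ in the class $[\omega]$, smooth on $(X \times Y) \setminus \pi_Y^{-1}(y)$, and with $\nu(T,(x,y)) = \nu(T_Y, y) > t$ (the Lelong number of a function $\psi_Y(w)$ pulled back by $\pi_Y$ sees only the $w$-direction).

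The hardest step will be pulling up to $\mathrm{Bl}_{(x,y)}(X \times Y)$ and upgrading the natural candidate to a K\"ahler current. By the Siu decomposition, $\mu^* T = \nu(T,(x,y))\llbracket E \rrbracket + S''$ with $S''$ positive and without mass on $E$, so $\mu^*T - t\llbracket E\rrbracket$ is a positive current in the right cohomology class, but not obviously K\"ahler. I would resolve this by mixing with a nearby K\"ahler class: pick $\delta \in (0, t)$ and a K\"ahler form $\omega_\delta$ in $[\mu^*\omega - (t-\delta)\llbracket E\rrbracket]$ (K\"ahler since $t - \delta < t = \ve_{(x,y)}(\omega)$), then for small $\lambda > 0$ set
\[
R := (1-\lambda)(\mu^*T - t\llbracket E\rrbracket) + \lambda(\omega_\delta - \delta\llbracket E\rrbracket).
\]
Substituting the Siu decomposition rewrites this as $R = \lambda\omega_\delta + (1-\lambda)S'' + c_\lambda \llbracket E\rrbracket$ with $c_\lambda = (1-\lambda)(\nu(T,(x,y))-t) - \lambda\delta \geq 0$ for $\lambda$ sufficiently small, so $R \geq \lambda\omega_\delta$ as currents; thus $R$ is a K\"ahler current in $[\mu^*\omega - t\llbracket E\rrbracket]$, with analytic singularities contained in $\mu^{-1}(\pi_Y^{-1}(y))$. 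The main obstacle is precisely this mixing step: balancing the strict Lelong excess $\nu(T,(x,y)) - t > 0$ along $E$ against the loss $-\lambda\delta\llbracket E\rrbracket$ incurred by importing K\"ahler positivity from the nearby class.
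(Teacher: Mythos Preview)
Your proof is correct. The big-and-nef part is essentially identical to the paper's (you route the Nakayama bound through $\ve_x(\omega_X)+\ve_y(\omega_Y)>t$, while the paper writes $t=\ve_x(\omega_X)<\ve_y(\omega_Y)\le\zeta_{(x,y)}(\omega)$; both are immediate from Proposition~\ref{dumppy}).

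For the $E_{nK}$ containment the two arguments diverge slightly in how they manufacture strict positivity on the blow-up. The paper takes $T_Y$ with Lelong number exactly $t$ and then, using the standard fact that $\delta\mu^*\omega-\delta'R_h$ is K\"ahler for $\delta'\ll 1$ (where $R_h$ is a smooth curvature form for $\mathcal{O}(E)$), replaces $\mu^*T$ by $\mu^*T-\delta'R_h+\delta'\llbracket E\rrbracket$; this is $\ddbar$-exact, makes the current K\"ahler, and bumps the Lelong number along $E$ to $t+\delta'$, after which one subtracts $t\llbracket E\rrbracket$ via Siu. You instead take $T_Y$ with Lelong number strictly larger than $t$ from the outset and then mix $\mu^*T-t\llbracket E\rrbracket$ convexly with $\omega_\delta-\delta\llbracket E\rrbracket$, where $\omega_\delta$ is a genuine K\"ahler form in the nearby class $[\mu^*\omega-(t-\delta)\llbracket E\rrbracket]$. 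Your balancing of the Lelong excess $(1-\lambda)(\nu-t)$ against the deficit $\lambda\delta$ is clean and avoids invoking the $R_h$ lemma directly (though of course the existence of $\omega_\delta$ is essentially equivalent to it). One small point: your $R$ need not literally have \emph{analytic} singularities (sums of log terms with different weights are not of the form $c\log\sum|f_i|^2$), but this is harmless since for the $E_{nK}$ conclusion you only need a K\"ahler current smooth outside $\mu^{-1}(\pi_Y^{-1}(y))$, which your $R$ visibly is.
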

\begin{proof}
The first part follows immediately from Proposition \ref{dumppy}, which tells us that $\ve_{(x,y)}(\omega) = \ve_x(\omega_X) < \ve_y(\omega_Y)\leq \zeta_{(x,y)}(\omega)$, and so the class $[\mu^*\omega - \ve_{(x,y)}(\omega)\llbracket E\rrbracket]$ must still be big, and not mearly pseudoeffective (that it is nef is always true, as the nef cone is closed).

For the second part, pull-back a K\"ahler current $T_Y\geq \delta\omega_Y$ ($\delta > 0$) from $Y$ to $X\times Y$, with $T_Y$ having an isolated singularity at $y$ with Lelong number $\ve_{(x,y)}(\omega)$; $T_Y$ exists again by Proposition \ref{dumppy}. Then $T_Y + \pi^*_X\omega_X\geq\delta\omega$ will be a K\"ahler current on $X\times Y$ that is smooth away from $\pi^{-1}_Y(y)$. Pulling back by $\mu$ gives:
\[
T := \mu^*(T_Y + \pi^*_X\omega_X)\geq \delta\mu^*\omega,
\]
where $T\in\mu^*[\omega]$ is now a positive current which is smooth and strictly positive away from $\mu^{-1}(\pi_Y^{-1}(y))$. We can then find a smooth form $R_h\in [E]$ such that $\delta\mu^*\omega - \delta'R_h$ is a K\"ahler form for all sufficently small $\delta' > 0$; thus, $T - \delta' R_h + \delta'\llbracket E\rrbracket$ is a K\"ahler current with analytic singularities, still in $\mu^*[\omega]$, with generic Lelong number along $E$ satisfying:
\[
\nu(T - \delta' R_h + \delta'\llbracket E\rrbracket, E) = \nu(T_Y, y) + \delta' > \ve_{(x,y)}(\omega).
\]
By the Siu decomposition, $T - \delta' R_h + (\delta' - \ve_{(x,y)}(\omega))\llbracket E\rrbracket$ is still a K\"ahler current, now in $[\mu^*\omega - \ve_{(x,y)}(\omega)\llbracket E\rrbracket]$, singular only on $\mu^{-1}(\pi_Y^{-1}(y))$, and so we are done.
\end{proof}

We may now prove Theorem \ref{badprime}:

\begin{proof}[Proof of Theorem \ref{badprime}]
Assume without loss of generality that $\int_X\omega^n = 1$. The following reductions are basically the same as what we have done in the previous section. We first rewrite $\vp_t$ as follows:
\begin{align*}
\vp_t = 2p^*u + \sup\{v\in&\PSH(X\times\overline{\mathbb D}, \pi^*\omega + 2\ddbar (p^*u))\ |\ \\&\limsup_{|\tau|\rightarrow 1}v(z,\tau) \leq 0,\text{ and } \nu(v,(z_0,0))\geq t\},
\end{align*}
where $u(\tau) = (1/2\pi)\log(1 + |\tau|^2) - (1/2\pi)\log(2)$ is the local potential for the Fubini-Study metric $\omega_{FS}$ on $\overline{\mathbb{D}}\subset \mathbb{CP}^1$, normalized to be 0 on $\partial\mathbb D$. The proof of this is the same as it is for the case of a total obstacle, but let us emphasize that we need as an additional assumption that $u$ be continuous near and up to the boundary, even though this is quite obvious in this case.


Letting then $\log|s|_h$ be the same as in the previous section, but on $\ti{X} := \mathrm{Bl}_{(z_0,0)}(X\times\CP^1)$ now, we again have that
\[
\ddbar\log|s|_h = \llbracket E\rrbracket - R_h,
\]
for a smooth form $R_h\in[E]$. Let $\mu$ be the blow-up map, as usual. Note that, if $U$ is a local coordinate chart around $(z_0,0)$, then
\[
\log|\mu(p) - (z_0,0)| - \log|s(p)|_h
\]
is bounded for all $p\in \mu^{-1}(U)$, where here $|\cdot|$ is the Euclidean distance in the coordinate patch $U$. Thus, using \eqref{logdef} and the fact that $\mu$ has connected fibres, one can check that:
\begin{align*}
\mu^*\vp_t = 2p^*u + \sup&\{v\in\PSH(\ti{X}, \mu^*(\pi^*\omega + 2p^*\omega_{FS}))\ |\ \\&\limsup_{|\tau|\rightarrow 1}v(z,\tau) \leq 0 \text{ and } v\leq t\log|s|_h + C,\text{ for some } C > 0\},
\end{align*}
where in the above expression we are implicitly using $(z,\tau)$ as coordinates on $\ti{X}\setminus E$, via the isomorphism $\mu$ with $(X\times\CP^1)\setminus \{(z_0,0)\}$. We can now use the same observation of Berman \cite{Be} as before to see that:
\begin{align*}
\mu^*\vp_t = 2\mu^*p^*u + &t\log|s|_h + \sup\{v\in\PSH(\ti{X}, \mu^*(\pi^*\omega + 2p^*\omega_{FS}) - tR_h)\ |\\
&\ \limsup_{|\tau| < 1, \tau\rightarrow \ti{\tau}}v(z,\tau)\leq -t\log|s(z,\ti{\tau})|_h\text{ for all }\ti{\tau}\in\partial\mathbb{D}\}.
\end{align*}
Again we emphasize that we need $\log|s|_h$ to be continuous near and up to the boundary.\\

We can now avail ourselves of Proposition \ref{dumppy}. Recall the elementary fact that if $\theta$ is a K\"ahler form on $Y$ with volume one, then $\ve_y(\theta)\leq\vol(\theta)^{1/n} = 1$ for all $y\in Y$, and that we always have equality in the case when $Y$ is a Riemann surface; see, for example, \cite[Theorem 4.6]{To2}, or the explicit construction in \cite[Lemma 5.5]{RWN3}. Thus, $\ve_\tau(2\omega_{FS}) = 2 > 1\geq \ve_{z_0}(\omega)$ for all $\tau\in\mathbb{D}\subset\CP^1$, and so we may apply Proposition \ref{dumppy} to see that
\[
\ve_{(z_0,0)}(\pi^*\omega + 2p^*\omega_{FS}) = \ve_{z_0}(\omega).
\]

When $t < \ve_{z_0}(\omega)$, we then get a smooth function $f$ on $X\times\CP^1$ such that $\omega_0 := \mu^*(\pi^*\omega + 2p^*\omega_{FS}) - tR_h + \ddbar f$ is a K\"ahler form. Restricting to $\mu^{-1}(X\times\overline{\mathbb{D}})$ and changing reference forms again now gives us the situation in \cite[Theorem 1.3]{CTW2}, except for the fact that the new boundary obstacle $f_0 := - t\log|s|_h - f$ isn't smooth or $\omega_0$-psh on all of $\mu^{-1}(X\times\overline{\mathbb{D}})$, as unwinding the definitions gives:
\[
\omega_0 + \ddbar f_0 = \mu^*(\pi^*\omega + 2p^*\omega_{FS}) - t\llbracket E\rrbracket.
\]
We are saved however by the fact that the obstacle is $\omega_0$-psh on the boundary -- we can use an extension trick of Boucksom \cite{Bou}, which is also outlined in \cite[Section 3]{CTW1}. Indeed, take $\chi'$ to be a compactly supported cutoff function of the origin in $\mathbb D$, such that $\chi'\equiv 1$ near 0. Then $\chi := 1-\chi'$ is equal to one in a neighborhood of $\partial\mathbb{D}$, and so $(\mu^*p^*\chi) f_0 = f_0$ near $\partial\mathbb{D}$ and $=0$ on a neighborhood of $\mu^{-1}(p^{-1}(0))\supset E$. Moreover, $\omega_0 + \ddbar((\mu^*p^*\chi)f_0)$ will fail to be positive only in the $\mathbb D$ direction and the mixed $\mathbb D$ and $X$ directions, so adding a large enough multiple of $\mu^*p^*(|\tau|^2 - 1)$ to $(\mu^*p^*\chi) f_0$ will produce a strictly $\omega_0$-psh function $\vp_0$ on all of $\ti{X}$ that agrees with $f_0$ on the boundary. We may then apply \cite[Theorem 1.3]{CTW2} to conclude regularity in this case.\\

If $t = \ve_{z_0}(\omega)$ however, we will need to use Theorem \ref{weak}. Let $\chi$ be as in the previous paragraph and define:
\begin{align*}
\alpha := \mu^*(\pi^*\omega + 2\omega_{FS}) &- tR_h - \ddbar((\mu^*p^*\chi) t\log|s|_h).
\end{align*}
Note that $\alpha$ is a smooth form on all of $\ti X$ and that it is strictly positive in a neighborhood of $\mu^{-1}(p^{-1}(\partial\mathbb D))$. We would then like to apply Theorem \ref{weak} to conclude regularity of the envelope:
\[
 \sup\left\{v\in\PSH(\ti{X}, \alpha)\ \middle|\ \limsup_{|\tau| < 1, \tau\rightarrow \ti{\tau}}v(z,\tau)\leq 0 \text{ for all }\ti{\tau}\in\partial\mathbb{D}\right\}.
\]
Now by Corollary \ref{the frog}, we have that $[\alpha]$ is big and nef on $\ti X$, so for every $\ve > 0$ we can find smooth functions $u_\ve$ such that $\alpha + \ve\ti\omega + \ddbar u_\ve$ is a K\"ahler form, where here $\ti\omega$ is a fixed K\"ahler metric on $\ti X$. Corollary \ref{the frog} also allows us to find a $\psi\in\PSH(\ti X,\alpha)$ with analytic singularities along $E_{nK}(\alpha)\subseteq \mu^{-1}(p^{-1}(0))$ such that $\alpha + \ddbar\psi$ is a K\"ahler current. Finally, we can again use the trick of Boucksom to find our $v_0$ by modifying $\psi$ near the boundary. Specifically, if $\eta$ is a cutoff function of the origin supported in $\D$ such that $\{\eta'\not=0\}\subseteq\{\chi\equiv 1\}$, then we define:
\[
v_0 := (\mu^*p^*\eta)\psi + C\mu^*p^*(|\tau|^2 - 1),
\]
for some large enough $C$ such that $v_0$ is strictly $\alpha$-psh. This works as $\alpha$ is a K\"ahler form on the set where $\eta$ is not constant. Finally, it is a standard fact that product domains such as $X\times\overline{\mathbb D}$ are Levi-flat, and in particular are thus weakly pseudoconcave. Thus, $(X\times\overline{\mathbb D}, \alpha|_{X\times\overline{\mathbb D}})$ satisfies the hypothesis of Theorem \ref{weak}, and so we are able to conclude the desired regularity of $\vp_t$.

\end{proof}

\section{Proof of Theorem \ref{weak}}

Recall we want to prove the following:
\begin{theorem}\label{weakprime}
Suppose that $(M,\omega)$ is a compact K\"ahler manifold with boundary. Suppose moreover that $\alpha$ is a closed, real $(1,1)$-form on $M$ such that the following holds:
\begin{itemize}
\item There exists a function $\psi\in\PSH(M,\alpha)$,  $\psi\leq 0$, with analytic singularities, smooth near and up to the boundary of $M$, that also satisfies $\alpha + \ddbar\psi\geq \delta\omega$ for some small $\delta> 0$.
\item For every $\ve > 0$, there exists a smooth function $u_\ve\leq -1$ such that $\alpha_\ve := \alpha + \e\omega + \ddbar u_\ve > 0$ is a K\"ahler form.
\item There exists a function $v_0\in\PSH(M,\alpha)$ such that $v_0\leq 0$, $v_0|_{\partial M}\equiv 0$, and $v_\e := \ti{\max}\{v_0,u_\e\}$ is a smooth $(\alpha+\ve\omega)$-psh function up to the boundary for all $\ve > 0$.
\end{itemize}
Then the envelope:
\begin{equation}\label{thethingprime}
V := \sup\left\{ v\in\PSH(M, \alpha)\ \middle|\ \limsup_{|z|\rightarrow \partial M} v(z)\leq 0\right\}
\end{equation}
is uniformly Lipschitz on compact subsets of $M\setminus\mathrm{Sing}(\psi)$.

If we further have that $\alpha + \ddbar v_0\geq \delta\omega$ in a neighborhood of $\partial M$ and that the boundary of $M$ is pseudoconcave, then actually $V\in C^{1,1}_\text{loc}(M\setminus\mathrm{Sing}(\psi))$.
\end{theorem}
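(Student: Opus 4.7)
My plan is to combine the ``zero temperature limit'' approach of Berman \cite{Be} with the degenerate Monge--Amp\`ere $C^{1,1}$ estimates of Chu--Tosatti--Weinkove \cite{CTW1,CTW2} and the existence/extension technique of Boucksom \cite{Bou}. For each $\ve>0$ and $\beta\gg 1$, I would solve the Dirichlet problem
\[
(\alpha_\ve + \ddbar \vp_{\ve,\beta})^n = e^{\beta \vp_{\ve,\beta}}\omega^n, \qquad \vp_{\ve,\beta}|_{\partial M} = -u_\ve|_{\partial M},
\]
whose solvability is furnished by \cite{Bou} using (an appropriate bounded modification of) $v_\ve - u_\ve$ as an $\alpha_\ve$-plurisubharmonic subsolution with the correct boundary values. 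A maximum-principle comparison shows that $\vp_{\ve,\beta}$ is monotone in $\beta$, and that its limit, shifted by $u_\ve$, recovers the $(\alpha+\ve\omega)$-envelope of nonpositive functions vanishing on $\partial M$; this in turn decreases to $V$ as $\ve\to 0$. Thus it suffices to prove uniform-in-$(\ve,\beta)$ regularity estimates for $\vp_{\ve,\beta}$ on compact subsets of $M\setminus \mathrm{Sing}(\psi)$.

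For the first (Lipschitz) statement, I would apply the maximum principle to a cut-off test function of the form
\[
Q = \log |\nabla \vp_{\ve,\beta}|^2_g - A\vp_{\ve,\beta} + B\psi,
\]
following \cite[\S 3]{CTW1}: the exponential right-hand side differentiates favorably, and the K\"ahler-current lower bound $\alpha + \ddbar \psi \geq \delta\omega$ absorbs the error coming from the fact that $\alpha$ itself need not be semipositive. Boundary Lipschitz control comes from using $v_\ve$ (smooth, vanishing on $\partial M$, lying below $\vp_{\ve,\beta}+u_\ve$ up to bounded adjustments) as a barrier to bound the normal derivative. Sending $\beta\to\infty$ and then $\ve\to 0$ yields the desired uniform Lipschitz bound for $V$.

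For the $C^{1,1}$ statement one needs a uniform Laplacian bound. Interiorly, the test function $\log \mathrm{tr}_\omega (\alpha_\ve + \ddbar \vp_{\ve,\beta}) - A\vp_{\ve,\beta} + B\psi$ from \cite[\S 3]{CTW1} again works and gives an interior Hessian bound with constants depending only on the distance to $\mathrm{Sing}(\psi)$. The genuinely new difficulty is the boundary Hessian estimate, where the additional hypotheses enter. The strict positivity $\alpha + \ddbar v_0 \geq \delta\omega$ near $\partial M$ turns $v_0$ into a strict subsolution in a neighborhood of the boundary, as required by the Caffarelli--Kohn--Nirenberg--Spruck scheme, while pseudoconcavity of $\partial M$ supplies the correct sign of $dd^c$ on a defining function of $\partial M$, which is what allows one to construct tangential--normal barriers appropriate for an envelope problem (as opposed to a standard Dirichlet problem). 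Once the tangential--normal and double--tangential second derivatives of $\vp_{\ve,\beta}$ are controlled, the double--normal direction is handled as usual by the Monge--Amp\`ere equation itself; letting $\beta\to\infty$ and then $\ve\to 0$ then yields $V\in C^{1,1}_{\mathrm{loc}}(M\setminus \mathrm{Sing}(\psi))$.

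The main obstacle is precisely this boundary second-order estimate: pseudoconcavity reverses the convexity present in the classical Dirichlet theory, so the usual Guan--Boucksom barriers must be modified and delicately coupled with $v_0$ near $\partial M$ and with $\psi$ further in, all while keeping every constant independent of $\ve$ and $\beta$ (which is nontrivial because $u_\ve$ and $v_\ve$ themselves degenerate as $\ve\to 0$). The strict positivity hypothesis $\alpha + \ddbar v_0 \geq \delta\omega$ near $\partial M$ is imposed precisely so that the resulting boundary barriers retain uniformly controlled positivity in the limit.
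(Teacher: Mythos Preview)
Your outline matches the paper's proof: Berman's zero-temperature Dirichlet problems on the $\ve$-perturbed forms (solved via Boucksom's subsolution criterion, with $v_\ve$ furnishing the subsolution after inserting a constant $f(\ve)$ on the right-hand side), followed by CTW-style interior gradient and Hessian estimates weighted by $\psi$, and boundary estimates via barriers built from $v_0$ and a $\Delta_\omega$-harmonic upper barrier. One correction on where pseudoconcavity enters: it is \emph{not} used for the tangential--normal barriers (those are the standard Guan--Boucksom ones and require only that $v_0$ be a strict subsolution near $\partial M$), but rather in the normal--normal step, where the needed uniform lower bound on $(\alpha_\ve + \ddbar V)|_{T^h_{\partial M}}$ reduces to showing $\ddbar(V-v_0)|_{T^h_{\partial M}} = (\nu\cdot(V-v_0))\,L_\nu \geq 0$, and this holds precisely because both the outward normal derivative of $V-v_0$ and the Levi form $L_\nu$ are nonpositive.
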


We shall prove this using Berman's approximation technique and the estimates in \cite{CTW1}. First, we state the following well-known version of the maximum principle for sub- and super-solutions of the negative curvature K\"ahler-Einstein equation:
\begin{theorem}\label{maxprinc}
Suppose that $u, v\in\PSH(M,\omega)\cap L^\infty(M)$, and that:
\[
(\omega + \ddbar u)^n\geq e^{\beta u +f}\omega^n,\ \ (\omega + \ddbar v)^n\leq e^{\beta v + f}\omega^n
\]
\[
\limsup_{z\rightarrow z_0} u(z)\leq \liminf_{z\rightarrow z_0} v(z)\ \forall z_0\in\partial M
\]
for some number $\beta > 0$ and smooth function $f$. Then $u\leq v$ on all of $M$.
\end{theorem}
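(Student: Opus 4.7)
The plan is to derive a contradiction by integrating the Monge-Amp\`ere sub- and super-solution inequalities over the set $\{u > v\}$ and using the Bedford-Taylor comparison principle to relate the two sides. Since $u$ and $v$ are both bounded and $\omega$-psh on $M$, their Monge-Amp\`ere measures $(\omega + \ddbar u)^n$ and $(\omega + \ddbar v)^n$ are well-defined Borel measures. The boundary hypothesis $\limsup_{z\to z_0}u(z)\leq\liminf_{z\to z_0}v(z)$ for all $z_0\in\partial M$ says precisely that $\liminf_{z\to\partial M}(v-u)(z)\geq 0$, which is exactly the boundary assumption required by the comparison principle.

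The first step is to apply the comparison principle for bounded $\omega$-psh functions on $M$ with the roles of ``upper'' and ``lower'' function exchanged, which yields
\[
\int_{\{u > v\}} (\omega + \ddbar u)^n \ \leq\ \int_{\{u > v\}} (\omega + \ddbar v)^n.
\]
Combining this with the distributional inequalities $(\omega + \ddbar u)^n\geq e^{\beta u + f}\omega^n$ and $(\omega + \ddbar v)^n\leq e^{\beta v + f}\omega^n$, we obtain
\[
\int_{\{u > v\}} e^{\beta u + f}\,\omega^n \ \leq\ \int_{\{u > v\}} e^{\beta v + f}\,\omega^n,
\]
equivalently $\int_{\{u > v\}}(e^{\beta u}-e^{\beta v})e^{f}\,\omega^n\leq 0$. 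Because $\beta>0$, the integrand is strictly positive on the set $\{u > v\}$, so this set must have Lebesgue measure zero, i.e.\ $u\leq v$ almost everywhere.

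The second step is to upgrade the a.e.\ inequality to a pointwise inequality. Fix $z_0\in M$ and let $N\subset M$ be the null set on which $u>v$. Since $u$ is $\omega$-psh it equals its upper semicontinuous regularization, so $u(z_0)=\limsup_{z\to z_0}u(z)$; moreover, by USC of $u$, removing a Lebesgue null set does not change this limsup, so $u(z_0)=\limsup_{z\to z_0,\,z\in M\setminus N}u(z)$. On $M\setminus N$ we have $u\leq v$, and $v$ is USC, hence
\[
u(z_0)=\limsup_{\substack{z\to z_0\\ z\in M\setminus N}} u(z)\ \leq\ \limsup_{\substack{z\to z_0\\ z\in M\setminus N}} v(z)\ \leq\ \limsup_{z\to z_0}v(z)\ \leq\ v(z_0),
\]
as desired.

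The only genuine subtlety is the first step, and in particular invoking the Bedford-Taylor comparison principle in the manifold-with-boundary setting rather than on a bounded domain in $\C^n$. However, since $u,v$ are bounded and $\omega$-psh, writing $\omega=\ddbar\rho$ in local charts reduces this to the classical statement for bounded psh functions, and the boundary behavior of $u-v$ ensures that $\{u > v\}$ is at positive distance from $\partial M$ after any translation $v\mapsto v+\e$ (and the result with $\e>0$ passed to the limit). This is the only point where care is needed; everything else is a direct combination of the hypotheses.
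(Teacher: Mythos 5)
The paper states Theorem~\ref{maxprinc} as a ``well-known version of the maximum principle'' and supplies no proof, so there is no argument in the paper to compare against. Your proof is the standard one and is correct in substance. The Bedford--Taylor comparison principle, applied with the roles of the two functions swapped (since $v$ is the one dominating near $\partial M$), gives
\[
\int_{\{u>v\}}(\omega+\ddbar u)^n\leq\int_{\{u>v\}}(\omega+\ddbar v)^n,
\]
the Monge--Amp\`ere sub/super-solution inequalities then force $\{u>v\}$ to be Lebesgue-null since $\beta>0$, and one upgrades this to a pointwise inequality using the sub-mean-value property of plurisubharmonic functions.

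Two minor imprecisions are worth flagging, neither of which affects the validity of the conclusion. First, in the a.e.-to-everywhere step you attribute the stability of the limsup under deletion of a null set to upper semicontinuity alone; it is really the sub-mean-value inequality doing the work. A cleaner phrasing: write $\omega=\ddbar\rho$ in a coordinate ball so that $u+\rho$ and $v+\rho$ are bounded psh there, and for each small $r$,
\[
u(z_0)+\rho(z_0)\ \leq\ \frac{1}{\mathrm{Vol}(B_r)}\int_{B_r(z_0)}(u+\rho)\ \leq\ \frac{1}{\mathrm{Vol}(B_r)}\int_{B_r(z_0)}(v+\rho),
\]
using $u\leq v$ a.e.; the right-hand side decreases to $v(z_0)+\rho(z_0)$ as $r\to 0$, which gives $u(z_0)\leq v(z_0)$. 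Second, your closing remark that the comparison principle on a manifold with boundary ``reduces to the classical statement by writing $\omega=\ddbar\rho$ in local charts'' is misleading: the comparison principle is a genuinely global integral estimate and cannot be assembled by patching local versions. The correct reference is the global version on compact K\"ahler manifolds with boundary, proved for instance in \cite{Bou}.
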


To reduce now to the case in \cite{Be}, first assume without loss of generality that $\omega\geq\alpha$, so that $\PSH(M,\alpha)\subset\PSH(M,\omega)$. Then let $h$ be the unique smooth solution to the following Dirichlet problem for the Laplacian of $\omega$:
\begin{equation}\label{h}
\Delta_\omega h = -n,\ \ \ h|_{\partial M}\equiv 0.
\end{equation}
It is then easy to see using the maximum principle for subharmonic functions that if $v$ is a candidate for the envelope \eqref{thethingprime}, then $v\leq h$, so that:
\begin{equation}
V = \sup \{v\in\PSH(M,\alpha)\ |\ v\leq h\}.
\end{equation}
Up to replacing $\alpha$ with $\alpha + \ddbar h$ (and $v_0$ with $v_0 - h$, etc.) we are then reduced to studying the envelope
\begin{equation}
V' = \sup \{v\in\PSH(M,\alpha)\ |\ v\leq 0\},
\end{equation}
and so we can now follow \cite{Be}, additionally taking into consideration the behavior at the boundary.

We begin by defining $\alpha_\ve := \alpha + \ve\omega$, so that $\alpha_\ve + \ddbar u_\ve > 0$ is a K\"ahler form on $M$. We then approximate $V'$ by the envelopes:
\[
 V_\e = \sup \{v\in\PSH(M,\alpha_\e)\ |\ v\leq 0\}.
\]
It is easy to see that the $V_\e$ are monotonically decreasing to $V'$ as $\e\rightarrow 0$, and that the convergence is actually uniform on compact sets away from $\mathrm{Sing}(\psi)$. Berman's technique is now to approximate the $V_\e$ by smooth solutions to the (non-degenerate) negative curvature K\"ahler-Einstein equation:
\begin{equation}\label{epsilonbeta}
(\alpha_\ve + \ddbar V_{\ve,\beta})^n = e^{\beta V_{\ve,\beta} + f}\omega^n,\ \ \alpha_\ve + \ddbar V_{\ve,\beta} > 0 ,\ \ V_{\ve,\beta}|_{\partial M} \equiv 0
\end{equation}
as $\beta\rightarrow\infty$, with $f = f(\e)$ a constant to be choosen momentarily, that will decrease to $-\infty$ as $\e\rightarrow 0$.

Solutions to \eqref{epsilonbeta} exist for all $\beta > 0$ by the the following theorem:
\begin{theorem}\label{CKNS}
Suppose that $(M^n,\omega)$ is a compact K\"ahler manifold with boundary, $f$ is a smooth function on $M$, $\vp$ is a smooth function on $\partial M$, and that $\beta > 0$ is a fixed constant. Then there exists a (unique) smooth solution $V$ to:
\[
(\omega + \ddbar V)^n = e^{\beta V + f}\omega^n,\ \ \omega + \ddbar V > 0,\ \ V|_{\partial M}= \vp
\]
if and only if there exists a smooth (strong) subsolution:
\[
(\omega + \ddbar v)^n \geq e^{\beta v + f}\omega^n,\ \ \omega + \ddbar v > 0,\ \ v|_{\partial M}= \vp.
\]
\end{theorem}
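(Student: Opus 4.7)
The ``only if'' direction and uniqueness are immediate. Any smooth solution is itself a subsolution, giving necessity. For uniqueness, if $V_1, V_2$ both solve the equation with boundary value $\vp$, Theorem \ref{maxprinc} applied with $u = V_1$, $v = V_2$ (and then with the roles swapped) forces $V_1 = V_2$.

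For existence, I would run the continuity method. Set $g := \log[(\omega + \ddbar v)^n/\omega^n] - \beta v$, which is smooth on $M$ (since $\omega + \ddbar v > 0$) and satisfies $g \geq f$, so that $v$ is the exact solution of the equation with $f$ replaced by $g$. Consider the family
\[
(E_t): \quad (\omega + \ddbar V_t)^n = e^{\beta V_t + tf + (1-t)g}\omega^n, \quad \omega + \ddbar V_t > 0, \quad V_t|_{\partial M} = \vp, \quad t\in[0,1],
\]
so that $V_0 := v$ solves $(E_0)$ and a solution of $(E_1)$ is what we are after. Let $T := \{t \in [0,1] : (E_t) \text{ admits a smooth solution}\}$. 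Openness of $T$ is routine: the linearization at a solution $V_{t_0}$ of $F(V) := \log[(\omega+\ddbar V)^n/\omega^n] - \beta V$ is $\Delta_{\omega + \ddbar V_{t_0}} - \beta$, whose Dirichlet problem has trivial kernel since $\beta > 0$ (by the strong maximum principle), hence is an isomorphism between the appropriate Hölder spaces; the implicit function theorem then applies.

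Closedness reduces to uniform a priori estimates in $C^{2,\alpha}$ for solutions of $(E_t)$ independent of $t$, after which Evans--Krylov plus Schauder bootstrapping supplies all higher derivatives. The stages are: the $C^0$ estimate comes from Theorem \ref{maxprinc}, comparing $V_t$ to $v$ as a lower barrier (it remains a subsolution of $(E_t)$ because $g \geq f$ forces $tf + (1-t)g \leq g$) and to an explicit upper barrier built from the Laplacian Dirichlet problem \eqref{h}, translated to have boundary value $\vp$; the boundary gradient bound then follows by sandwiching $V_t$ between $v$ and such an upper barrier, both matching $\vp$ on $\partial M$, and reading off the normal derivative; the interior gradient estimate is obtained by a Blocki / Hou--Ma--Wu style maximum principle computation on a weighted $|\nabla V_t|^2$; and the interior second-order estimate is by the usual maximum principle applied to the largest eigenvalue of $\ddbar V_t$ (equivalently $\tr_\omega(\ddbar V_t)$), using that the right-hand side is smooth in $V_t$ with $\beta > 0$ giving a good sign.

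The principal obstacle is the boundary $C^2$ estimate. Tangential-tangential second derivatives on $\partial M$ are controlled by differentiating the boundary data $\vp$, and the purely normal-normal component is extracted algebraically from the equation once the mixed part is under control; but the mixed tangential-normal estimate requires a genuine barrier argument of Caffarelli--Kohn--Nirenberg--Spruck type. Here one uses the strict subsolution: since $v - V_t \leq 0$ with equality on $\partial M$, the difference $v - V_t$ vanishes on $\partial M$ and is non-positive inside, so a Hopf-lemma-style application to a carefully chosen auxiliary function involving $v - V_t$ and a defining function of $\partial M$ controls the mixed derivatives. This is exactly where the hypothesis $\omega + \ddbar v > 0$ is essential (beyond its role in the $C^0$ bound), and adapting the classical Euclidean strictly pseudoconvex argument to the present manifold-with-boundary setting is the content of Boucksom's construction \cite{Bou}. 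Closedness then follows and the continuity method delivers $V_1$ as the desired solution.
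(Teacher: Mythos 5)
The paper itself does not give a self-contained proof of Theorem~\ref{CKNS}; it only remarks that the result is ``a well-known modification of the proof of \cite[Theorem A]{Bou},'' deferring to that reference and to \cite{Yau,CKNS,Gu,Chen,Bl2}. Your proposal is precisely a sketch of that continuity-method proof, with the correct interpolating family $(E_t)$ (note in particular your right-hand-side $tf+(1-t)g$ keeps $v$ a subsolution for every $t$ because $g\geq f$, so the $C^0$ comparison persists), the correct identification of the linearized operator $\Delta_{\omega+\ddbar V_{t_0}}-\beta$ as invertible with Dirichlet data since $\beta>0$, and the correct list of a priori estimates. So yes, this is essentially the same approach as the paper points to, and the verification that Theorem~\ref{maxprinc} yields both uniqueness and the $C^0$ bound is correct.

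One small caveat worth flagging, though it does not undermine the proposal: in the boundary second-order estimate you describe the normal-normal derivative as ``extracted algebraically from the equation once the mixed part is under control.'' This also requires a uniform \emph{lower} bound on $\det\bigl((\omega+\ddbar V)|_{T^h_{\partial M}}\bigr)$, which the tangential-tangential estimate alone (an upper bound from differentiating $\vp$ and the boundary gradient bound) does not give. In the pseudoconvex and pseudoconcave cases this lower bound comes from a sign argument as in Proposition~\ref{HessB}, but for general boundary one needs the more involved barrier argument in \cite{Gu} or \cite{Bou}; your phrasing correctly attributes this to Boucksom's construction, so the gap is one of emphasis rather than substance.
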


\begin{remark}
Theorem \ref{CKNS} is a well-known modification of the proof of \cite[Theorem A]{Bou}, which is due to several authors (\cite{Yau}, \cite{CKNS}, \cite{Gu}, \cite{Chen}, \cite{Bl2}). Indeed, if $M$ is actually a smoothly bounded domain in $\C^n$ then Theorem \ref{CKNS} was already shown in \cite{Gu} (and in \cite{CKNS} in the pseudoconvex case).
\end{remark}

We can apply Theorem \ref{CKNS} to solve \eqref{epsilonbeta} as follows. First, rewrite \eqref{epsilonbeta} as:
\begin{equation}\label{rewrite}
(\alpha_\ve + \ddbar u_\e+ \ddbar u)^n = e^{\beta u+ \beta u_\ve+ f + \log(\omega^n/(\alpha_\ve + \ddbar u_\e)^n)}(\alpha_\ve + \ddbar u_\e)^n,
\end{equation}
\[
u|_{\partial M} = -u_\ve|_{\partial M}
\]
with $u := V_{\ve,\beta} - u_\ve$. Then, by assumption, both $v_0$ and $u_\ve$ are strictly $\alpha_\e$-psh, and so their regularized maximum:
\[
v_\e := \ti{\max}\{v_0, u_\ve\}
\]
will be a smooth, strictly $\alpha_\e$-psh function  with $v_\ve\leq 0$ and $ v_\ve =v_0$ in a neighborhood of the boundary. In particular, $v_\e|_{\partial M}\equiv 0$. Choosing then:
\[
f(\ve) \leq \inf_M\log\left(\frac{(\alpha_\e + \ddbar v_\e)^n}{\omega^n}\right)
\]
makes $v_\e - u_\ve$ into a subsolution of \eqref{rewrite}, as $v_\ve\leq 0$, so that:
\begin{equation}\label{subsub}
(\alpha_\ve + \ddbar v_\e)^n \geq e^{f(\ve)}\omega^n\geq e^{\beta v_\ve+ f(\ve)}\omega^n = e^{\beta (v_\ve - u_\ve) + \beta u_\ve+ f(\ve)}\omega^n.
\end{equation}
Thus, Theorem \ref{CKNS} applies, and so the $V_{\ve,\beta}$ exist and are smooth up to the boundary. We will also assume without loss of generality that we have choosen the $f(\ve)$ such that $f \leq 0$, is strictly increasing in $\ve$, and satisfies:
\begin{equation}\label{notreallynecessary}
\lim_{\ve\rightarrow 0} e^{f(\ve)} = 0.
\end{equation}

\noindent The following is due to Berman \cite[Prop 2.4]{Be}. In it, we regard $\e$ as fixed:

\begin{proposition}\label{uniform}
Then exists constants $A, \beta_0 \geq 0$, depending on $\alpha,$ $\omega$, and $\e$ such that:
\[
\Vert V_{\ve,\beta} - V_{\ve}\Vert_{C^0(X)} \leq \frac{A \log\beta}{\beta} \text{ for all } \beta\geq\beta_0.
\]
In particular, the $V_{\ve,\beta}$ converge uniformly to $V_\e$ as $\beta\rightarrow \infty$.
\end{proposition}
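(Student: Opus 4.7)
The plan is to prove the two-sided bound $\|V_{\ve,\beta} - V_\ve\|_{C^0(M)} \leq A\log\beta/\beta$ for all $\beta \geq \beta_0$, mirroring \cite[Prop 2.4]{Be} but using the smooth subsolution $v_\ve$ furnished by the hypotheses of Theorem \ref{weak} in place of a Demailly regularization, and tracking the boundary behavior carefully.

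The upper bound $V_{\ve,\beta} \leq V_\ve + C_\ve/\beta$ is elementary. At an interior maximum of $V_{\ve,\beta}$ the $(1,1)$-form $\ddbar V_{\ve,\beta}$ is nonpositive, hence $(\alpha_\ve + \ddbar V_{\ve,\beta})^n \leq \alpha_\ve^n \leq C\omega^n$ pointwise. Combined with the Monge--Amp\`ere equation \eqref{epsilonbeta}, this forces $\beta V_{\ve,\beta} + f(\ve) \leq \log C$ at that point, and together with $V_{\ve,\beta}|_{\partial M} \equiv 0$ yields $\sup_M V_{\ve,\beta} \leq C_\ve/\beta$. Therefore $V_{\ve,\beta} - C_\ve/\beta \leq 0$ is an $\alpha_\ve$-psh candidate in the envelope defining $V_\ve$, giving the claim.

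For the lower bound $V_\ve \leq V_{\ve,\beta} + A\log\beta/\beta$ I would reduce to showing $v \leq V_{\ve,\beta} + A\log\beta/\beta$ for every $\alpha_\ve$-psh $v \leq 0$. Replacing $v$ by $\max\{v, v_\ve\}$ (still a candidate), we may further assume $v_\ve \leq v \leq 0$, so $v$ is bounded with bound depending only on $\ve$. Now interpolate with the subsolution: set $v_\sigma := (1-\sigma)v + \sigma v_\ve$ for $\sigma := \beta^{-A/n}$. Then $v_\sigma$ is still $\alpha_\ve$-psh, $\leq 0$, and by concavity of $\det^{1/n}$ together with the subsolution estimate \eqref{subsub},
\[
(\alpha_\ve + \ddbar v_\sigma)^n \geq \sigma^n(\alpha_\ve + \ddbar v_\ve)^n \geq \sigma^n e^{f(\ve)}\omega^n = \beta^{-A}e^{f(\ve)}\omega^n.
\]
Setting $\ti v := v_\sigma - A\log\beta/\beta$ and using $v_\sigma \leq 0$,
\[
(\alpha_\ve + \ddbar \ti v)^n \geq \beta^{-A}e^{f(\ve)}\omega^n \geq e^{\beta \ti v + f(\ve)}\omega^n,
\]
so $\ti v$ is a bounded $\alpha_\ve$-psh subsolution of the Monge--Amp\`ere equation \eqref{epsilonbeta} solved by $V_{\ve,\beta}$, with $\ti v|_{\partial M} \leq -A\log\beta/\beta \leq 0 = V_{\ve,\beta}|_{\partial M}$. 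Theorem \ref{maxprinc} (applied to the K\"ahler form $\alpha_\ve + \ddbar u_\ve$ via the rewriting \eqref{rewrite}) then gives $\ti v \leq V_{\ve,\beta}$, i.e., $v_\sigma \leq V_{\ve,\beta} + A\log\beta/\beta$. Since $v - v_\sigma = \sigma(v - v_\ve)$ is bounded by $2\|v_\ve\|_\infty \beta^{-A/n}$, and this quantity is absorbed into $\log\beta/\beta$ once $A \geq n$ and $\beta$ is large, we obtain $v \leq V_{\ve,\beta} + A'\log\beta/\beta$ uniformly in $v$, and taking the supremum yields the lower bound.

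The technical content lives entirely in the lower bound: a naive comparison applied to $v - A\log\beta/\beta$ would fail because $(\alpha_\ve + \ddbar v)^n$ can vanish for a rough $\alpha_\ve$-psh $v$, so we must manufacture Monge--Amp\`ere mass by convexly mixing with $v_\ve$ --- this is where the smoothness and strict $\alpha_\ve$-positivity of $v_\ve$ hypothesized in Theorem \ref{weak} are essential. The precise choice $\sigma = \beta^{-A/n}$ balances the $\sigma^n$ loss from the mixing against the $\beta^{-A}$ gain produced by the $A\log\beta/\beta$ correction, after which the maximum principle of Theorem \ref{maxprinc} closes the argument.
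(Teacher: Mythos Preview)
Your proof is correct and follows essentially the same approach as the paper: both obtain the upper bound by the interior maximum principle, and both obtain the lower bound by convexly mixing an arbitrary candidate $v$ with the subsolution $v_\ve$ to manufacture Monge--Amp\`ere mass and then applying the comparison principle (Theorem~\ref{maxprinc}). The paper takes mixing parameter $\sigma=1/\beta$ and shift $n\log\beta/\beta$ (and handles the boundedness issue via $\max\{v,v_\ve-C\}$, $C\to\infty$), while you take $\sigma=\beta^{-A/n}$ and replace $v$ by $\max\{v,v_\ve\}$ upfront---these are cosmetic differences in the same argument.
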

\begin{proof}
Consider a point $x_0$ at which $V_{\ve}$ achieves its maximum. If $x_0\in\partial M$, then $V_{\ve}\leq 0$. If $x_0$ is in the interior of $M$, then we have that $\ddbar V_{\ve}\leq 0$ at $x_0$, so that:
\[
\sup_M\frac{(\alpha + \omega)^n}{\omega^n}\geq \frac{(\alpha_\e + \ddbar V_{\ve,\beta})^n(x_0)}{\omega^n(x_0)}\geq e^{\beta V_{\ve}(x_0) + f}.
\]
It follows that:
\begin{equation}\label{hahaha}
V_{\ve,\beta}\leq \frac{A}{\beta},
\end{equation}
where $A$ depends on $f(\e)$. Thus:
\[
V_{\ve,\beta} - \frac{A}{\beta}\leq V_\ve.
\]

For the reverse inequality, let $C > 0$ be arbitrary and consider $v\in\PSH(M,\alpha_\ve)$, $v\leq 0$. Define:
\[
g := \left(1-\frac{1}{\beta}\right) \max\{v, v_\e - C\} + \frac{1}{\beta} v_\e - \frac{n\log\beta}{\beta},
\]
for $\beta \geq 1$. One then checks that:
\[
(\alpha_\e + \ddbar g)^n \geq \frac{1}{\beta^n}(\alpha_\e + \ddbar v_\e)^n \geq  \frac{1}{\beta^n}e^{f}\omega^n = e^{\beta(-n(\log\beta)/\beta) + f}\omega^n \geq e^{\beta g + f}\omega^n,
\]
as both $v, v_\e\leq 0$. This implies that $g$ is a subsolution to \eqref{epsilonbeta}, and so Theorem \ref{maxprinc} implies
\[
g \leq V_{\ve,\beta}.
\]
Letting then $C\rightarrow +\infty$ we see:
\[
\left(1-\frac{1}{\beta}\right) v + \frac{1}{\beta} v_\e - \frac{n\log\beta}{\beta} \leq V_{\e,\beta},
\]
and so taking the supremum over all such $v$ gives:
\[
V_{\ve,\beta} - \frac{A}{\beta}\leq V_\e \leq \frac{1}{1 - 1/\beta} V_{\ve,\beta}  - \frac{v_\e}{\beta(1-1/\beta)} + \frac{1}{1-1/\beta}\frac{n\log\beta}{\beta}.
\]
Up to possibly increasing $A$, depending now on the infimum of $v_\e$, and again using the fact that $V_{\ve,\beta}\leq A/\beta$, we see:
\[
|V_{\ve,\beta} - V_\ve| \leq \frac{A\log\beta}{\beta}
\]
for all $\beta$ sufficently large, as desired.

\end{proof}

We now bound the norm of the gradient of $V_{\e,\beta}$ on the boundary, independent of $\ve$ and $\beta$. As before, note that $\PSH(M,\alpha_\e)\subset\PSH(M,2\omega)$ and that $V_{\ve,\beta}|_{\partial M}\equiv 0$, so again the maximum principle implies:
\begin{equation}\label{upp}
V_{\ve,\beta}\leq 2h.
\end{equation}
We also just checked above, \eqref{subsub}, that $v_\ve$ is a subsolution to \eqref{epsilonbeta}, so Theorem \ref{maxprinc} gives:
\[
v_\ve\leq V_{\ve,\beta}.
\]
But now, as $v_\ve$ is smooth and equal to $v_0$ near the boundary and $v_0|_{\partial M} \equiv 2h|_{\partial M}\equiv 0$ are both fixed, it follows from a simple calculus exercise that the gradient of $V_{\ve,\beta}$ is bounded in the normal direction on $\partial M$. That it is bounded in the tangent directions is obvious, and so we conclude
\begin{equation}\label{bddbdd}
\sup_{\partial M} |\nabla V_{\e,\beta}|_\omega^2 \leq C,
\end{equation}
for $C$ independent of $\ve,\beta$.

Following \cite[Prop. 4.1]{CTW1}, we now bound the gradient on the interior away from $\mathrm{Sing}(\psi)$. In order to ensure that our estimates are independent of both $\ve$ and $\beta$, we take care to make sure they are independent of $\inf f$. Note that we are now dropping the $\ve$ and $\beta$ subscripts, for ease of notation.

\begin{proposition}\label{list}
Let $V$ solve:
\begin{equation}\label{eq}
(\alpha_\e + \ddbar V)^n = e^{\beta V + f}\omega^n,\ \ \ti\omega := \alpha_\e +\ddbar V > 0,\ \ V|_{\partial M} \equiv 0.
\end{equation}
Then there exists constants $B, C, \ve_0, \beta_0 > 0$ such that the following estimate holds on $M\setminus\mathrm{Sing}(\psi)$ for all $0 < \ve \leq \ve_0$ and $\beta \geq \beta_0$:
\[
|\nabla V|^2_g \leq Ce^{-B\psi}
\]
where here $\nabla$ is the Levi-Civita connection of $\omega$, and $g$ its associated metric. $B$, $C$, $\ve_0$, and $\beta_0$ will depend only on the following quantities: $M$, $\alpha$, $\omega$, $\psi$, $\sup_M f$, and $\sup_M|\nabla f|_g^2$.
\end{proposition}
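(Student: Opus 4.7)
I would prove this gradient estimate by an auxiliary function maximum principle argument, in the spirit of Chu--Tosatti--Weinkove \cite[Prop.\ 4.1]{CTW1}, with the function $\psi$ playing two roles simultaneously: its strict positivity $\alpha+\ddbar\psi\geq \delta\omega$ furnishes the K\"ahler current in $[\alpha]$ needed to control $\tr_{\ti\omega}\omega$ from above, and its logarithmic blow-up along $\mathrm{Sing}(\psi)$ forces the test function to attain its maximum away from that set. Concretely, on $M\setminus \mathrm{Sing}(\psi)$ I would consider
\[
Q \;:=\; \log|\nabla V|^2_g \;+\; B\psi \;-\; AV,
\]
where $A,B>0$ are constants to be chosen at the end; since $V$ is smooth on the interior while $B\psi\to -\infty$ along $\mathrm{Sing}(\psi)$, $Q\to -\infty$ there and $\sup Q$ is attained either on $\partial M$ or at an interior point $p_0\in M\setminus\mathrm{Sing}(\psi)$. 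On $\partial M$ we have $V\equiv 0$, $\psi$ is smooth up to the boundary by hypothesis, and $|\nabla V|^2_g$ is bounded by \eqref{bddbdd}, so $Q\leq C_0$ there, with $C_0$ depending only on the permitted data.

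At an interior maximum $p_0$, apply $L:=\Delta_{\ti\omega}$ to obtain $0\geq L(Q)(p_0)$. The three pieces are computed as follows: $\ddbar V = \ti\omega-\alpha_\ve$ gives $L(V)=n-\tr_{\ti\omega}\alpha_\ve$; the hypothesis on $\psi$ gives $L(\psi)\geq \delta\tr_{\ti\omega}\omega - \tr_{\ti\omega}\alpha$; and the standard Bochner--Kodaira inequality for $\log|\nabla V|^2_g$ in the metric $\ti\omega$, combined with the differentiated Monge--Amp\`ere identity
\[
\ti g^{k\bar\ell}V_{ik\bar\ell}\;=\;\beta V_i+f_i+g^{k\bar\ell}g_{ik\bar\ell}-\ti g^{k\bar\ell}(\alpha_\ve)_{ik\bar\ell}
\]
obtained by differentiating $\log$ of \eqref{eq}, yields
\[
L(\log|\nabla V|^2_g)\;\geq\; 2\beta \;-\; K\,\tr_{\ti\omega}\omega \;-\; C\bigl(1+\sup_M|\nabla f|^2_g\bigr),
\]
with $K,C$ depending only on the curvature and torsion of $\omega$ and on $\alpha$. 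Inserting this into $L(Q)\leq 0$ and first choosing $B$ so that $B\delta - K\geq 1$ makes $\tr_{\ti\omega}\omega$ a favorable term; then choosing $A$ large allows the negative contribution $-An$ from $-AL(V)$ to be absorbed using the arithmetic--geometric inequality $(\tr_{\ti\omega}\omega)^n\geq n^n (\omega^n/\ti\omega^n)=n^n e^{-(\beta V+f)}$ together with the upper bounds $V\leq A/\beta$ from \eqref{hahaha} and $f\leq \sup_M f$. One concludes $Q(p_0)\leq C_1$, and then $Q\leq \max(C_0,C_1)$ globally yields $|\nabla V|^2_g\leq Ce^{-B\psi+AV}\leq C'e^{-B\psi}$.

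The principal obstacle is ensuring $\ve$- and $\beta$-uniformity of all constants, and in particular avoiding any dependence on $\inf_M f$, which blows up as $\ve\to 0$ in the intended application. Favorably, $\beta$ enters the Bochner inequality only through the good term $+2\beta$, and $f$ enters only through $\sup_M f$ (via the arithmetic--geometric step that bounds $\tr_{\ti\omega}\omega$ from below) and $\sup_M|\nabla f|^2_g$ (via Cauchy--Schwarz applied to the mixed term $V_{\bar j}f^{\bar j}$ from Bochner), never through $\inf f$. A point of care is that the boundary estimate \eqref{bddbdd} was itself derived by sandwiching $v_\ve\leq V_{\ve,\beta}\leq 2h$ with $v_0$ and $h$ both smooth up to $\partial M$, so $C_0$ indeed depends only on the listed data. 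This makes the whole argument robust as $\ve\to 0$ and $\beta\to\infty$, which is what will be needed in the next step of the proof of Theorem \ref{weakprime} to pass to the limit and extract the Lipschitz (and later $C^{1,1}$) bound on $V$.
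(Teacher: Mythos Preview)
Your overall plan---maximum principle on a weighted gradient quantity, with $\psi$ serving both as the barrier at $\mathrm{Sing}(\psi)$ and as the source of the K\"ahler current lower bound, and the boundary case handled by \eqref{bddbdd}---is exactly the paper's. The gap is in the claimed inequality $L(\log|\nabla V|^2_g)\geq 2\beta - K\,\tr_{\ti\omega}\omega - C$: this does not follow from Bochner. Writing $\Delta_{\ti\omega}\log|\nabla V|^2_g = |\nabla V|^{-2}\Delta_{\ti\omega}|\nabla V|^2_g - |\nabla V|^{-4}\,\bigl|\partial|\nabla V|^2\bigr|^2_{\ti\omega}$, the second (negative) term at the critical point of $Q$ equals $|A\nabla V - B\nabla\psi|^2_{\ti\omega}$, and the Cauchy--Schwarz bound on it costs \emph{twice} the good third-order contribution $|\nabla V|^{-2}\sum_{i,k}\ti g^{i\bar i}(|V_{ik}|^2+|V_{i\bar k}|^2)$ coming from $\Delta_{\ti\omega}|\nabla V|^2_g$; the net is a negative third-order remainder that neither $\tr_{\ti\omega}\omega$ nor the $+2\beta$ term can absorb. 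Put differently, your weight $B\psi - AV$ is linear, so there is no convexity to kill the quadratic cross term, and ``choosing $A$ large'' only makes that hidden quadratic worse.

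The paper repairs this by following \cite[Prop.\ 4.1]{CTW1} with the multiplicative quantity $Q=e^{h(\ti V)}|\nabla V|^2_\omega$, where $\ti V:=V-\psi$ (first shown to be $\geq 0$ via Theorem~\ref{maxprinc}) and $h(s)=-Bs+(s-A+1)^{-1}$ with $A=\inf_M\ti V$. The extra $(s-A+1)^{-1}$ supplies $h''>0$: in $e^{-h}\Delta_{\ti\omega}Q$ the cross term contributes $-(h')^2|\nabla V|^2|\nabla\ti V|^2_{\ti\omega}$, while $|\nabla V|^2\,e^{-h}\Delta_{\ti\omega}e^h$ contributes $((h')^2+h'')|\nabla V|^2|\nabla\ti V|^2_{\ti\omega}$; after cancellation the surviving $h''|\nabla V|^2|\nabla\ti V|^2_{\ti\omega}$ is what absorbs the residual bad $Ch'|\nabla\ti V|^2_{\ti\omega}$ term (modulo the harmless dichotomy $|\nabla V|^2\geq C(\ti V-A+1)^3$ at the maximum). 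All the other features you identify---that $\beta$ enters only through the favorable $+2\beta$, that only $\sup f$ and $\sup|\nabla f|^2_g$ (never $\inf f$) appear, and that the boundary constant from \eqref{bddbdd} is uniform in $\ve,\beta$---are correct and are used exactly as you describe.
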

\begin{proof}
Note first that we have:
\[
\ti{V} := V - \psi \geq 0
\]
for $\e\leq \ve_0$ by the following computation. Let $C > 0$ be arbitrary. Then it follows from Demailly \cite{De-1} that:
\[
 (\alpha_\e + \ddbar\max\{\psi, v_\e - C\})^n \geq \chi_{\{\psi \geq v_\ve - C\}} (\alpha_\e + \ddbar\psi)^n + \chi_{\{v_\ve - C > \psi\}}(\alpha_\e + \ddbar v_\ve)^n
\]
\[
\geq \chi_{\{\psi\geq v_\ve - C\}} \delta^n\omega^n + \chi_{\{v_\ve - C > \psi\}} e^{f}\omega^n \geq e^{f(\ve_0)} \omega^n \geq e^{\beta \max\{\psi, v_\e - C\} + f}\omega^n,
\]
where here $\e_0$ is choosen small enough such that $e^{f(\ve_0)}\leq \delta^n$. Theorem \ref{maxprinc} then implies $\max\{\psi, v_\e - C\} \leq V$ and so letting $C\rightarrow \infty$ gives the desired inequality.

We will also define:
\[
\ti{\omega} := \alpha_\e + \ddbar V.
\]
Consider the quantity:
\[
Q := e^{h(\ti{V})} |\nabla V|_\omega^2,
\]
where here $h$ is defined by:
\[
h(s) = - Bs + \frac{1}{s- A + 1}
\]
for $A := \inf_M\ti{V}$ and $B > 0$ a constant to be choosen later. Note that $h' < 0$ and $h'' > 0$, and that:
\[
|-B\ti{V} - h(\ti{V})|\leq 1,
\]
so in particular $h(\ti{V})$ is bounded above.

We will apply the maximum principle to $Q$ -- let $x_0$ be a point at which $Q$ achieves its maximum. If $x_0\in\partial M$, then we have already seen that $|\nabla  V|_\omega^2(x_0) \leq C$, and so then:
\[
Q\leq C,
\]
on all of $M$. Using the definition of $Q$ then gives:
\[
|\nabla  V|_\omega^2\leq C e^{-h(\ti{V})}\leq C e^{-B\psi},
\]
using $(i)$, as desired.

Otherwise, assume that $x_0$ is an interior point of $M$. It cannot be that $x_0\in\mathrm{Sing}(\psi)$, as $Q\equiv 0$ on that set. Choose holomorphic normal coordinates for $\omega$ at $x_0$ that also diagonalize $\ti{\omega}$. Then we have:
\begin{equation}\label{all}
0\geq e^{-h}\Delta_{\ti\omega}Q(x_0) = |\nabla  V|_\omega^2e^{-h}\Delta_{\ti\omega}e^h + \Delta_{\ti\omega}|\nabla  V|_\omega^2 + 2e^{-h}\mathrm{Re}\langle\nabla e^h, \overline{\nabla}|\nabla V|_\omega^2\rangle_{\ti \omega}.
\end{equation}
We deal with the three terms seperately. For the first term, expanding it out gives:
\begin{equation}\label{fir}
e^{-h}\Delta_{\ti\omega} e^h \geq ((h')^2 + h'')|\nabla \ti{V}|_{\ti\omega}^2 + nh' - h'\delta\mathrm{tr}_{\ti\omega}\omega
\end{equation}
using that $h' < 0$ and:
\[
\ti\omega -\ddbar\ti{V} = \alpha + \ve\omega + \ddbar\psi\geq \delta\omega
\]
on $M\setminus\mathrm{Sing}(\psi)$. For the second term, differentiate the log of \eqref{eq} to see:
\begin{equation}\label{sec}
\Delta_{\ti\omega}|\nabla V|^2_\omega \geq \sum_{i,k}\ti{g}^{i\overline{i}}(|V_{ik}|^2 + |V_{i\overline{k}}|^2) - C|\nabla V|^2_\omega\tr_{\ti\omega}\omega + 2\beta|\nabla V|_\omega^2 + 2\text{Re}\langle\nabla f,\overline{\nabla} V\rangle_{\omega},
\end{equation}
where $C$ is a lower bound for the curvature of $\omega$ and we have assumed, without loss of generality, that $|\nabla V|_\omega^2 \geq 1$.

For the third term, following \cite[(4.8)]{CTW1}, we see that:
\begin{equation}\label{thi}
2e^{-h}\mathrm{Re}\langle\nabla e^h, \overline{\nabla}|\nabla V|_\omega^2\rangle_{\ti \omega} \geq 3h'|\nabla V|_\omega^2 + h'|\nabla\psi|_\omega^2 + \frac{C}{\delta} h'|\nabla \ti{V}|_{\ti\omega}^2
\end{equation}
\[
+ \frac{\delta}{2}h'|\nabla V|_\omega^2\tr_{\ti\omega}\omega + \sum_{i,k}\ti{g}^{i\overline{i}}|V_{ik}|^2 - (h')^2|\nabla V|_\omega^2|\nabla\ti V|^2_{\ti\omega}.
\]
Putting together \eqref{all}, \eqref{fir}, \eqref{sec}, and \eqref{thi}, and using the fact that $\psi$ has analytic singularities, we see that:
\begin{align*}
0&\geq h''|\nabla V|_\omega^2|\nabla\ti{V}|_{\ti\omega}^2 + \left(-\frac{\delta}{2}h' - C\right)|\nabla V|_\omega^2\mathrm{tr}_{\ti\omega}\omega\\ &+ (Ch' + 2\beta)|\nabla V|_\omega^2 + C h'e^{-C_1\psi} + \frac{C}{\delta}h'|\nabla \ti{V}|_{\ti\omega}^2 + 2\text{Re}\langle\nabla f,\overline{\nabla} V\rangle_{\omega},
\end{align*}
for uniform constants $C$ and $C_1$. Using Cauchy-Schwarz on the last term, we can absorb the $\nabla V$ term to get:
\begin{align*}
0&\geq h''|\nabla V|_\omega^2|\nabla\ti{V}|_{\ti\omega}^2 + \left(-\frac{\delta}{2}h' - C\right)|\nabla V|_\omega^2\mathrm{tr}_{\ti\omega}\omega\\ &+ (Ch' + 2\beta - 1)|\nabla V|_\omega^2 + C h'e^{-C_1\psi} + \frac{C}{\delta}h'|\nabla \ti{V}|_{\ti\omega}^2 - C
\end{align*}
where we have also possibly increased $C$ to be a lower bound for $|\nabla f|_\omega^2$. Choosing now $B = 2 + \max\{(2C+2)/\delta, C_1\}$ and $\beta_0 \geq C(B + 1)\geq 1$, we see that:
\[
0\geq \frac{2}{(\ti V - A + 1)^3}|\nabla V|_g^2|\nabla \ti V|_{\ti\omega}^2 + |\nabla V|_\omega^2\tr_{\ti\omega}\omega + \beta|\nabla V|_\omega^2 - Ce^{-C_1\psi} - C - C|\nabla\ti{V}|_{\ti\omega}^2.
\]
We may assume without loss of generality that $|\nabla V|_g^2\geq C(\ti V - A + 1)^3$ at $x_0$ (for if not, one may check that $Q\leq C$ for a uniform $C$ independent of $x_0$), and so we get:
\[
|\nabla V|_\omega^2 \leq \frac{C e^{-C_1\psi} + C}{\beta + \tr_{\ti\omega}\omega}.
\]
After possibly increasing $C$, we may now conclude in an analogous manner to \cite[Prop. 4.1]{CTW1}, using that:
\[
\tr_{\ti\omega}\omega \geq e^{-(\beta/n) V - f/n}\geq Ce^{-(\beta/n) V}
\]
\end{proof}

We now turn to the Hessian estimates. As before, we begin by bounding the Hessian at the boundary, independant of $\e$ and $\beta$. We largely follow the exposition of \cite{Bou}.

\begin{proposition}\label{HessB}
Suppose that $V$ is as in Proposition \ref{list}. If we additionally assume that the boundary of $M$ is weakly pseudoconcave and that $v_0$ is strictly $\alpha$-psh (i.e. $\alpha + \ddbar v_0\geq \delta \omega > 0$), then there exist constants $C, \e_0, \beta_0 > 0$ such that:
\[
|\nabla \nabla V(x)|^2_g \leq C
\]
for all $x\in\partial M$, $0 < \e \leq\e_0$, and $\beta\geq\beta_0$. As before, $\nabla$ is the Levi-Civita connection of $\omega$, $g$ is its associated metric, and $C$,$\ve_0,$ and $\beta_0$ only depend on the following quantities: $M$, $\alpha$, $\omega$, $v_0$, $\sup_M f$ and $\sup_M|\nabla f|_g^2$.
\end{proposition}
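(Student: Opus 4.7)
The plan is to fix a boundary point $x_0\in\partial M$ and decompose the complex Hessian of $V$ at $x_0$ into three blocks with respect to a local holomorphic frame adapted to $\partial M$: the tangential-tangential block, the mixed tangential-normal block, and the normal-normal block. I bound each block separately, by quantities depending only on the data listed, following the classical CKNS/Boucksom barrier strategy adapted to the pseudoconcave setting. Throughout, I use that $V$ is uniformly bounded by Proposition \ref{uniform} and uniformly Lipschitz on a neighborhood of $\partial M$ by Proposition \ref{list} (since $\psi$ is smooth near the boundary).

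For the tangential-tangential block, I choose local holomorphic coordinates $(z^1,\dots,z^n)$ centered at $x_0$ so that $\partial M=\{\rho=0\}$ for some smooth defining function $\rho$ with $\partial/\partial z^n$ the complex normal direction. Since $V|_{\partial M}\equiv 0$, differentiating this identity twice along the boundary yields, for $1\leq \alpha,\beta\leq n-1$,
\[
V_{\alpha\ov{\beta}}(x_0) = -\,V_\nu(x_0)\,\rho_{\alpha\ov{\beta}}(x_0) + (\text{terms linear in }\nabla V),
\]
where $V_\nu$ denotes the normal derivative. Since $|\nabla V|_\omega$ is uniformly bounded at $x_0$ by Proposition \ref{list}, this gives the tangential-tangential estimate.

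For the mixed block I construct a barrier. On a small half-ball $U$ around $x_0$ set
\[
\Phi := A(v_\e - V) + B\,|z|^2 - C\rho,
\]
for constants $A,B,C>0$ to be chosen. The assumption $\alpha+\ddbar v_0\geq\delta\omega$ near $\partial M$, combined with $v_\e = v_0$ near $\partial M$ and $\ddbar|z|^2 = \omega_{\mathrm{Eucl}}$, makes $A(v_\e-V)+B|z|^2$ strictly $\ti\omega$-superharmonic after choosing $A,B$ large relative to $\mathrm{tr}_{\ti\omega}\omega$; the weak pseudoconcavity of $\partial M$ ensures that $-C\rho$ contributes the correct sign to $\Delta_{\ti\omega}\Phi$. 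Now let $T$ be a first order tangential operator with smooth coefficients on $U$; differentiating the logarithm of \eqref{eq} once in the $T$-direction and using the uniform gradient bound and the bound on $\sup_M|\nabla f|_g$, one checks that
\[
\Delta_{\ti\omega}\bigl(\pm T(V-v_\e)\bigr) \geq - C'\,\bigl(1+\mathrm{tr}_{\ti\omega}\omega\bigr),
\]
while $T(V-v_\e)\equiv 0$ on $\partial M\cap U$, and $|T(V-v_\e)|$ is bounded on $\partial U\cap M$ by the interior gradient estimate and the choice of smooth $v_\e$. Choosing the constants in $\Phi$ large enough, $\Phi\pm T(V-v_\e)\geq 0$ on $\partial(U\cap M)$ and $\Delta_{\ti\omega}(\Phi\pm T(V-v_\e))\leq 0$ in the interior, so the maximum principle gives $|T(V-v_\e)|\leq\Phi$ on $U\cap M$. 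Differentiating this inequality in the normal direction at $x_0$, where $\Phi$ and $T(V-v_\e)$ both vanish, produces $|V_{T\ov{\nu}}(x_0)|\leq C$, which is the mixed estimate.

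For the normal-normal block, I observe that the controlled blocks plus positivity of $\ti\omega$ force the remaining entry to be bounded. Namely, expanding the determinant along the last row and column,
\[
\det(\ti\omega)(x_0) = \ti g_{n\ov n}\cdot\det(\ti\omega_{\mathrm{tan}}) + R,
\]
where $\ti\omega_{\mathrm{tan}}$ is the tangential $(n-1)\times(n-1)$ block and $R$ depends only on the bounded blocks. Combined with the upper bound on $\det(\ti\omega) = e^{\beta V + f}\det(g)$ coming from the uniform bound $V\leq A/\beta$ in \eqref{hahaha} and $\sup f$, and with the lower bound $\ti\omega_{\mathrm{tan}}\geq c\,\omega_{\mathrm{tan}}$ for some $c>0$ (from the tangential estimate and positivity), this yields an upper bound on $\ti g_{n\ov n}$, hence on $|V_{n\ov n}|$. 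The real Hessian bound follows.

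I expect the main obstacle to be the mixed estimate: getting the signs in the construction of $\Phi$ right requires using both the strict $\alpha$-plurisubharmonicity of $v_0$ near $\partial M$ and the weak pseudoconcavity of $\partial M$ in a coordinated way, and doing so with constants independent of $\e$ and $\beta$ (in particular independent of $\inf f(\e)$), using only $\sup_M f$ and $\sup_M|\nabla f|_g^2$ as in Proposition \ref{list}.
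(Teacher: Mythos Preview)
Your overall architecture matches the paper's: decompose into tangential, mixed, and normal-normal blocks, use a barrier of Boucksom type for the mixed estimate, and finish by expanding the Monge--Amp\`ere determinant. The problem is that you have misidentified where the pseudoconcavity hypothesis is actually needed, and this creates a genuine gap in the normal-normal step.

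In your normal-normal argument you assert a uniform lower bound $\ti\omega_{\mathrm{tan}}\geq c\,\omega_{\mathrm{tan}}$ ``from the tangential estimate and positivity.'' This does not follow: your tangential estimate only gives that the entries $\ti g_{\alpha\ov\beta}$ are \emph{bounded}, and positivity of the full matrix $\ti\omega$ says nothing about a uniform lower bound on its principal minor. The paper obtains this lower bound by writing
\[
(\alpha_\ve+\ddbar V)|_{T^h_{\partial M}}=(\alpha_\ve+\ddbar v_0)|_{T^h_{\partial M}}+\ddbar(V-v_0)|_{T^h_{\partial M}},
\]
using $\alpha+\ddbar v_0\geq\delta\omega$ for the first term, and then observing that since $V-v_0\geq 0$ with equality on $\partial M$ one has $\ddbar(V-v_0)|_{T^h_{\partial M}}=(\nu\cdot(V-v_0))L_\nu$ with $\nu\cdot(V-v_0)\leq 0$ and $L_\nu\leq 0$ by weak pseudoconcavity, so this term is $\geq 0$. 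This is exactly where pseudoconcavity enters, and without it the determinant expansion does not close.

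Conversely, in the mixed step pseudoconcavity plays no role. The barrier works because $v_\e=v_0$ near $\partial M$ is strictly $\alpha$-psh, so $\Delta_{\ti\omega}(v_\e-V)\leq n-\delta\,\tr_{\ti\omega}\omega$; this is what makes $\Delta_{\ti\omega}\Phi$ sufficiently negative. Your inclusion of $-C\rho$ and the appeal to pseudoconcavity there is unnecessary (and in any case pseudoconcavity only controls $\ddbar\rho$ on $T^h_{\partial M}$, not $\Delta_{\ti\omega}\rho$). The paper's barrier is $b=(V-v_0)+\lambda h-\mu r^2$, with $h$ the harmonic function from \eqref{h}, and no defining-function term.
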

\begin{proof}

Let $x\in\partial M$ and pick coordinates centered at $x$ as in \cite[Rmk. 7.13]{Bou}. We will also use the same quantities $r, B,$ and $D_j$, $1\leq j\leq 2n$ from \cite[Lemma 7.17]{Bou}. Note that all the $D_j$'s commute.

Proposition \ref{list} implies that we have a uniform bound:
\[
\sup_{x\in\partial M} |D_j V| \leq K
\]estimates
for all $1\leq j\leq 2n$.

The tangent-tangent directions are trivial, as for $1\leq i, j\leq 2n-1$, $D_j V$ is constantly zero on $\partial M$ and $D_i$ is tangent to $\partial M$.

We now bound the tangent-normal directions -- fix $1\leq j\leq 2n-1$. We first claim that we can choose constants $\lambda$ and $\mu$ as in \cite[pg. 273]{Bou} such that:
\[
 b := (V- v_0) + \lambda h - \mu r^2 \geq 0 \text{ on } B,
\]
with equality on $\partial M$, and:
\[
 \Delta_V b \leq -\frac{\delta}{2}\tr_V\omega,
\]
where here $\Delta_V$ and $\tr_V$ are the Laplacian and trace with respect to the solution metric $\alpha_\ve + \ddbar V$. The proof is essentially the same as in \cite{Bou}, and so we omit it for brevity.

Define then:
\[
 w := K(\mu_1 b + \mu_2 |z|^2) - |D_j V|.
\]
It is easy to see that by fixing the radius of $B$ and choosing $\mu_2$, we can arrainge that $w \geq 0$ on $\partial B$. 

We now claim that $w\geq 0$ on all of $B$. To see this, let $z_0\in B$ be a minimum point for $w$ -- if $z_0\in\partial B$, then we are done, so suppose that $z_0$ is in the interior. If we have that $D_j V(z_0) = 0$, then we are also done, so suppose that $D_j V(z_0)\not= 0$ -- smoothness then implies that $D_j V$ is also non-zero in a small neighborhood of $z_0$, so that $|D_j V| = \pm D_j V$ is also smooth near $z_0$. We then compute:
\[
 \Delta_V w = K(\mu_1 \Delta_V b + \mu_2 \tr_V (\omega_{Eucl})) - \Delta_V |D_j V|
\]
\[
 \leq K\left(-\frac{\delta\mu_1}{2}\tr_V\omega + \mu_2 \tr_V (C\omega)\right) - \Delta_V |D_j V| 
\]
\[
 \leq K\left(C\mu_2 - \frac{\delta\mu_1}{2}\right)\tr_V(\omega) - \Delta_V \abs{D_j V}.
\]
One can then check that, as in \cite{Bou}, we have:
\[
\Delta_V (D_j V) =  \tr_V (D_j \ddbar V) + V_{x_n} \Delta_V a + 2a_{y_n} - 2\tr_V(da\wedge \iota_{\partial/\partial y_{n}}\alpha_\e).
\]
Applying $D_j$ to \ref{eq}, we also get:
\[
 \tr_V(D_j\ddbar V) = -\tr_V(D_j\alpha_\e) + \tr_{\omega}(D_j\omega) + D_j f + \beta D_j V,
\]
and so:
\[
\Delta_V (D_j V) = \Gamma + \beta D_j V,
\]
where:
\[
\Gamma := V_{x_n} \Delta_V a + 2a_{y_n} - 2\tr_V(da\wedge \iota_{\partial/\partial y_{n}}\alpha_\e) -\tr_V(D_j\alpha_\e) + \tr_{\omega}(D_j\omega) + D_j f.
\]
Note that we can uniformly bound $\Gamma$ on both sides:
\[
|\Gamma| \leq C\tr_V \omega,
\]
because:
\[
 \tr_V\omega \geq n \left(\frac{\omega^n}{(\alpha_\e + \ddbar V)^n}\right)^{1/n} = n \left(\frac{\omega^n}{e^{\beta V + f}\omega^n}\right)^{1/n}= n e^{-(\beta/n) V - f/n} \geq c,
\]
for $c > 0$ independant of $\e,\beta$, by \eqref{hahaha}.

We then have two cases, $D_j V(z_0) > 0$ or $D_j V(z_0) < 0$, which are basically the same. We only do the later; in that case, we have $\abs{D_j V} = -D_j V$ near $z_0$ and so:
\[
 \Delta_v w(z_0) \leq K\left(C\mu_2 - \frac{\delta\mu_1}{2}\right)\tr_V(\omega) + \Gamma + \beta D_j V(z_0) \leq -\tr_V\omega < 0
\]
for a large enough $\mu_1$, independant of $\beta$, as $D_j V (z_0) < 0$. But this is now a contradiction with the definition of $z_0$ -- as an interior minimum, we must have that $\Delta_V w(z_0) \geq 0$. We thus conclude that:
\[
 w \geq 0
\]
everywhere, as desired. This now implies:
\[
-K (\mu_1 b  + \mu_2\abs{z}^2)\leq \abs{D_j V} \leq K (\mu_1 b + \mu_2\abs{z}^2),
\]
and since both sides are $0$ at $x$, we conclude:
\[
\abs{D_{2n} D_j V}(x) \leq C,
\]
as desired. A compactness argument then makes the bound uniform over all of $\partial M$. \\

We are then left to deal with the normal-normal direction. For this, we switch back to the complex derivatives, and note that we will be done if we bound the quatity $V_{n\overline{n}}$. Also observe that, given our choice of $r$, $\{z_1,\ldots, z_{n-1}\}$ span the holomorphic tangent bundle of $M$ at $0$, which we will denote by $T^h_{\partial M}$. If we then write $\alpha_\ve$ in the $z$-coordinates as:
\[
\alpha_\ve := \sum_{j,k} \alpha_{j\overline{k}} dz^j\wedge d\overline{z}^k,
\]
we see that our bounds on the tangent-tangent and tangent-normal derivatives give:
\[
\abs{\det(\alpha_{j\overline{k}} + V_{j\overline{k}})_{1\leq j,k\leq n} - (\alpha_{n\overline{n}} + V_{n\overline{n}})\det(\alpha_{j\overline{k}} + V_{j\overline{k}})_{1\leq j, k\leq n-1}} \leq C.
\]
The Monge-Ampere equation for $V$ tells us that the first term is just equal to $e^{\beta V + f}\omega^n \leq e^C\omega^n \leq C$, by \eqref{hahaha}. Thus, it is sufficent to find a uniform lower bound for the second determinate, which will be implied by a lower bound for the form:
\[
(\alpha_\ve + \ddbar V) |_{T^h_{\partial M}} = (\alpha_\ve + \ddbar v_0)|_{T^h_{\partial M}} + \ddbar (V - v_0) |_{T^h_{\partial M}} 
\]
\[
\geq \delta\omega|_{T^h_{\partial M}} + \ddbar (V - v_0)|_{T^h_{\partial M}}.
\]
Since we have that $V - v_0$ is uniformly zero on $\partial M$, it is a standard computation to check that:
\[
\ddbar (V- v_0)|_{T^h_{\partial M}} = (\nu\cdot (V- v_0) )L_\nu,
\]
where here $\nu$ is an outward facing normal vector field on $\partial M$ and $L_\nu$ is the Levi-form of $M$ with respect to $\nu$. Since $M$ is weakly pseudoconcave, by definition we have that $L_\nu \leq 0$. But we also have that $V - v_0 \geq 0$ with equality at the boundary, so that $\nu \cdot (V - v_0) \leq 0$ also. Thus, we see that:
\[
(\alpha_\ve + \ddbar V) |_{T^h_{\partial M}} \geq \delta\omega|_{T^h_{\partial M}},
\]
as desired.

\end{proof}

Finally, we claim interior bounds on the Hessian, following \cite{CTW1}.

\begin{proposition}\label{Hess}
Suppose that $V$ is as in Proposition \ref{list}. If we additionally assume that the boundary of $M$ is weakly pseudoconcave and that $v_0$ is strictly $\alpha$-psh (i.e. $\alpha + \ddbar v_0\geq \delta \omega > 0$), then there exist constants $B,C, \e_0, \beta_0 > 0$ such that:
\[
|\nabla \nabla V|^2_g \leq Ce^{B\psi}
\]
for all $0 < \e \leq\e_0$ and $\beta\geq\beta_0$. As before, $\nabla$ is the Levi-Civita connection of $\omega$, $g$ is its associated metric, and $B$, $C$, $\ve_0,$ and $\beta_0$ only depend on the following quantities: $M$, $\alpha$, $\omega$, $v_0$, $\sup_M f$, $\sup_M|\nabla f|_g^2$, and a lower bound for $\nabla\nabla f$ with respect to $\omega$.
\end{proposition}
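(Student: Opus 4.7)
I would follow the template of \cite[Section 4]{CTW1}, adapting it to the Dirichlet setting via the boundary Hessian bound of Proposition \ref{HessB}. The idea is to apply the maximum principle to a test function of the form
\[
Q = \log \lambda_1 + \phi(|\nabla V|^2_g) + \eta(\ti V),
\]
where $\lambda_1$ denotes the largest eigenvalue of the endomorphism $g^{-1}(\ddbar V)$ acting on $T^{1,0}M$, $\ti V := V - \psi \geq 0$ (as verified in the proof of Proposition \ref{list}), and $\phi,\eta$ are smooth auxiliary functions: $\phi$ increasing and concave on the range of $|\nabla V|^2_g$, and $\eta$ decreasing and concave (of the form in \cite[(4.12)]{CTW1}). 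Since $\ti V \to +\infty$ along $\mathrm{Sing}(\psi)$, choosing $\eta$ to decrease sufficiently fast forces $Q$ to be bounded above on $\overline{M}\setminus\mathrm{Sing}(\psi)$, so $Q$ attains its maximum at some $x_0$ in this set.

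If $x_0\in\partial M$, then Proposition \ref{HessB} gives $\lambda_1(x_0)\leq C$, while the gradient bound of Proposition \ref{list} and the smoothness of $\psi$ up to $\partial M$ control the remaining two terms, yielding $Q(x_0)\leq C$. If instead $x_0$ is an interior point, I would pick holomorphic normal coordinates for $\omega$ at $x_0$ that simultaneously diagonalize $\ti\omega := \alpha_\ve + \ddbar V$, perturb $\lambda_1$ into a $C^2$ function by the standard trick (as in \cite[Lemma 2.2]{CTW2}), and compute $\Delta_{\ti\omega} Q(x_0) \leq 0$.

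Expanding this inequality produces several competing terms: a favorable third-order contribution $\sum_{i,k}\ti g^{i\overline i}|V_{11\overline k}|^2/\lambda_1^2$ from twice differentiating \eqref{eq}; cross-terms of the form $\mathrm{Re}\langle\nabla e^{\eta},\overline{\nabla}|\nabla V|^2_g\rangle_{\ti\omega}$ absorbed using the concavity of $\phi$ together with the gradient bound $|\nabla V|^2_g \leq Ce^{-B\psi}$ of Proposition \ref{list}; a curvature-type term from $\ddbar \ti V = \ti\omega - (\alpha_\ve + \ddbar\psi)$ combined with $\alpha+\ddbar\psi\geq\delta\omega$, which yields $-\eta'\delta\,\tr_{\ti\omega}\omega$ with the favorable sign once $\eta'<0$; and lower-order pieces involving $f$, $\nabla f$, $\nabla\nabla f$ (all bounded by hypothesis), $\alpha_\ve$, and the term $\beta V_{1\overline 1}=\beta\lambda_1$ arising from differentiating the right-hand side of \eqref{eq}, which has the favorable sign since $\beta>0$. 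Balancing these as in \cite[(4.12)--(4.13)]{CTW1} then gives $\log\lambda_1(x_0)\leq C$, hence $Q\leq C$ globally, and in particular the desired pointwise bound.

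The main obstacle is the familiar one from \cite{CTW1}: orchestrating the auxiliary functions $\phi$ and $\eta$ so that every error term produced by twice differentiating the Monge-Amp\`ere equation is absorbed by a corresponding good term, with constants independent of $\ve$ and $\beta$. The boundary considerations are confined to Case 1 above; this is precisely why we need the pseudoconcavity of $\partial M$ and the strict $\alpha$-plurisubharmonicity of $v_0$ near $\partial M$, as inputs to Proposition \ref{HessB}. Letting $\beta\to\infty$ and then $\ve\to 0$ as in Proposition \ref{uniform} transfers the estimate to the envelope $V$ itself.
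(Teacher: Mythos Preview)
Your overall strategy matches the paper's: invoke Proposition \ref{HessB} at the boundary and run the interior maximum-principle argument of \cite{CTW1}. Two points need correction, though. First, you take $\lambda_1$ to be the largest eigenvalue of $g^{-1}(\ddbar V)$ on $T^{1,0}M$; bounding this yields only a complex-Hessian (Laplacian) estimate, not the real Hessian bound $|\nabla\nabla V|_g$ claimed in the proposition. The whole point of the \cite{CTW1} argument is that $\lambda_1$ is the largest eigenvalue of the \emph{real} Hessian $\nabla^2 V$, viewed as a symmetric endomorphism of the real tangent bundle, and the computation there is engineered to absorb the extra third-order terms (mixed $V_{ijk}$, not just $V_{i\bar jk}$) that this choice generates. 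With your $\lambda_1$ the conclusion $Q\leq C$ would not imply the stated estimate. Second, the paper (following \cite{CTW1}) first establishes a uniform Laplacian bound via the standard Yau/Aubin second-order estimate combined with Proposition \ref{HessB} on $\partial M$; this control of $\tr_\omega\ti\omega$ is then fed into the real-Hessian step. You skip this preliminary, but it is used in \cite{CTW1} to bound several error terms in the $\Delta_{\ti\omega}Q$ computation, so it should be recorded explicitly.
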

\begin{proof}
 The idea is the same as in \cite[Proposition 4.3]{CTW1}. First, bound the Laplacian of $V$ indpendant of $\ve,\beta$, using the standard estimates from the proof of Yau's theorem and Proposition \ref{HessB}. 
 
 We then consider the same quantity as in \cite{CTW1}, based off of bounding the perturbed first eigenvalue of $\nabla\nabla V$. As the Hessian is totally bounded on the boundary by Proposition \ref{HessB}, we only have to deal with the case of an interior maximum, and then the proof is the same as in \cite{CTW1}, the only difference being the function $f$. However, the estimates of \cite[Theorem 2.1]{CTW1} already deal with such a term, and the modifications are very minor.
\end{proof}

\begin{proof}[Proof of Theorem \ref{weakprime}]
For the first claim, Proposition \ref{list} shows that we have a uniform gradient bound on $V_{\ve,\beta}$ for all $0 < \e \leq \e_0$ and $\beta \geq \beta_0$ on any compact subset $K\subset M\setminus\mathrm{Sing}(\psi)$. Thus, we can apply Proposition \ref{uniform} to see that the same bound holds for $V_\e$, by letting $\beta\rightarrow\infty$. Letting then $\ve\rightarrow 0$ then shows that $V$ also has the same bounds, and so is uniformly Lipschitz on $K$.

If we further assume that $M$ is weakly pseudoconcave and that $v_0$ is strictly $\alpha$-psh, then Theorem \ref{Hess} shows that we actually have uniform Hessian bounds on $K$, so again letting $\beta\rightarrow\infty$ and then $\e\rightarrow 0$ implies that $V$ also has the same bounds. It then follows that $V\in C^{1,1}(K)$ by standard elliptic estimates.
\end{proof}

\begin{remark}\label{extremal}
Suppose that $M^n\subset X^n$ is a smoothly bounded domain, and let $\omega$ be a K\"ahler form on $X$. We have already seen a natural function associated to $M$ (or more precisely, to $M^c$):
\[
V_{M^c} := \sup\{v\in\PSH(X,\omega)\ |\ v\leq 0 \text{ on } M^c\}^*.
\]
It is definitional that:
\[
V_{M^c}|_{M} \leq \sup\left\{v\in\PSH(M,\omega|_M)\ \middle|\ \limsup_{z\rightarrow\partial M}v(z)\leq 0\right\} =: V,
\]
and it follows from, say, Prop. \ref{uniform}, that $V|_{\partial M} \equiv 0$. Thus, a simple gluing argument shows that we actually have equality between the two. In particular, we have re-proved that $M^c$ is {\it regular}, i.e. that $V_{M^c}$ is continuous -- see \cite[Prop. 1.6]{BBWN}, where they prove a more general condition for regularity. We actually see more however: It follows then from \cite[Theorem B]{Bou} that $V_{M^c}$ is globally Lipschitz, as we have control of the gradient on the boundary of $M$ -- thus, $M$ has the H\"older continuous property, or HCP, of order one, which should be the optimal global regularity one can expect for $V_{M^c}$ \cite[Rmk. 3.7]{Sic}. Moreover, if one assumes that $M$ is pseudoconcave (i.e. $M^c$ is pseudoconvex) then $V_{M^c}$ is actually in $C^{1,1}(M)$, by \cite{CTW2}.

It is an interesting question to ask how much one can weaken the assumption on smoothness of the boundary. We only use boundary regularity to solve the Dirichlet problems \eqref{h} and \eqref{epsilonbeta}, and, since we are only proving Lipschitz regularity, one would guess that we do not actually need either of these solutions to be smooth. Results in this direction are presumably already well-known to experts, but we were unable to find any exact references in the literature, as previous works are primarily interested in sets with low/no boundary regularity.

Finally, our Theorem \ref{weakprime} and a simple gluing argument also shows that, if we replace $\omega$ with a smooth form $\alpha$ whose class is nef and big, with $\partial M$ and $E_{nK}(\alpha)$ not intersecting, and $\alpha > 0$ in a neighborhood of $\partial M$, and further assume the existance of a $v_0$, then $M$ still has the HCP property of order one with respect to $\alpha$.
\end{remark}

\end{document}